\newcommand{\declarecolor}[2]{\definecolor{#1}{RGB}{#2}\expandafter\newcommand\csname #1\endcsname[1]{\textcolor{#1}{##1}}}
\newcommand{\declareperson}[1]{\expandafter\newcommand\csname#1\endcsname[1]{\textcolor{Orange}{#1: ##1}}}
\theoremstyle{plain}
\newtheorem{theorem}{Theorem}[section]
\newtheorem{lemma}[theorem]{Lemma}
\newtheorem{corollary}[theorem]{Corollary}
\newtheorem{proposition}[theorem]{Proposition}
\newtheorem*{no-lemma}{Lemma}
\theoremstyle{definition}
\newtheorem{definition}[theorem]{Definition}
\theoremstyle{remark}
\newtheorem{remark}[theorem]{Remark}
\newtheorem{example}[theorem]{Example}
\newlist{parts}{enumerate}{10}
\setlist[parts]{label=\arabic*.,ref=\arabic*}
	\crefname{partsi}{Part}{Parts}
	\crefname{partsi}{part}{parts}
\Crefname{partsi}{Part}{Parts}
\newlist{forms}{enumerate}{10}
\setlist[forms]{label=\roman*),ref=(\roman*)}
	\crefname{formsi}{}{}
	\crefname{formsi}{}{}
\Crefname{formsi}{}{}
\newlist{conds}{enumerate}{10}
\setlist[conds]{label=\rm{(\arabic*)},ref=\arabic*}
	\crefname{condsi}{}{}
	\crefname{condsi}{}{}
\Crefname{condsi}{}{}
\DeclareMathOperator{\sgn}{sgn}
\DeclareMathOperator{\cter}{center}
\DeclarePairedDelimiterX{\card}[1]{\lvert}{\rvert}{#1}
\DeclarePairedDelimiterX{\abs}[1]{\lvert}{\rvert}{#1}
\DeclarePairedDelimiterX{\norm}[1]{\lVert}{\rVert}{#1}
\DeclarePairedDelimiterX{\tuple}[1]{\lparen}{\rparen}{#1}
\DeclarePairedDelimiterX{\parens}[1]{\lparen}{\rparen}{#1}
\DeclarePairedDelimiterX{\brackets}[1]{\lbrack}{\rbrack}{#1}
\DeclarePairedDelimiterX{\set}[1]\{\}{#1}
\let\Pr\relax
\DeclarePairedDelimiterXPP{\Pr}[1]{\mathbb{P}}[]{}{#1}
\DeclarePairedDelimiterXPP{\PrX}[2]{\mathbb{P}_{#1}}[]{}{#2}
\DeclarePairedDelimiterXPP{\Ex}[1]{\mathbb{E}}[]{}{#1}
\DeclarePairedDelimiterXPP{\ExX}[2]{\mathbb{E}_{#1}}[]{}{#2}
\title{The Sixth Moment of Random Determinants}
\author{Dominik Beck}
\affil{\small Charles University in Prague}
\author{Zelin Lv}
\affil{\small The University of Chicago}
\author{Aaron Potechin}
\affil{\small The University of Chicago}
\begin{document}
	\maketitle
\begin{abstract}
In this paper, we determine the sixth moment of the determinant of an asymmetric $n \times n$ random matrix where the entries are drawn independently from an arbitrary distribution $\Omega$ with mean $0$. Furthermore, we derive the asymptotic behavior of the sixth moment of the determinant as the size of the matrix tends to infinity. 
\end{abstract}

Acknowledgement: This research was supported by NSF grant CCF-2008920.
\section{Introduction}
The behavior of the determinant of a random matrix has been extensively studied. One line of work analyzed the kth moment of a random determinant, i.e., the expected value of the kth power of the determinant of a random matrix. Tur\'{a}n observed that the second moment of the determinant of an $n \times n$ matrix (where the entries have mean $0$ and variance $1$) is $n!$. The fourth moment of the determinant was determined by Nyquist, Rice, and Riordan \cite{NRR54}. For the special case when the entries of the random matrix are Gaussian, several works \cite{forsythe1952extent, NRR54} showed that the kth moment of the determinant is $\prod_{j=0}^{\frac{k}{2}-1}{\frac{(n + 2j)!}{(2j)!}}$.

Several variations of this question have also been analyzed. Dembo \cite{10.2307/43637554} generalized these results for $p \times n$ matrices (where we consider $E\left[\det (MM^T)^{\frac{k}{2}}\right]$ rather than $E\left[\det (M)^k\right]$). Very recently, Beck \cite{https://doi.org/10.48550/arxiv.2207.09311} determined the fourth moment of the determinant of an $n \times n$ random matrix with independent entries from an arbitrary distribution (which may not have mean $0$). For symmetric random matrices, the second moment of the determinant was analyzed by Zhurbenko \cite{Zhu68}.

A second line of work analyzed the distribution of the determinant of a random matrix. Girko \cite{girko1980central, doi:10.1137/S0040585X97975939} showed that under some assumptions on the entries of the random matrix, the logarithm of the determinant obeys a central limit theorem, but his proofs are quite difficult. Nguyen and Vu \cite{10.1214/12-AOP791} gave a simpler proof of a stronger version of this central limit theorem.

A third line of work analyzed the probability that a random $n \times n$ matrix with $\pm{1}$ entries is singular. Koml\'{o}s \cite{komlos1967determinant, komlos1968determinant} was the first to prove that this probability is $o(1)$. Kahn, Koml{\'o}s, and Szemer{\'e}di \cite{kahn1995probability} proved that this probability is at most $.999^n$, which was the first exponential upper bound. A series of works \cite{https://doi.org/10.1002/rsa.20109, tao2007singularity,BOURGAIN2010559} improved this upper bound culminating in the work of Tikhomirov \cite{10.4007/annals.2020.191.2.6} who proved an upper bound of $(\frac{1}{2} + o(1))^n$, which is tight. For the symmetric case, Costello, Tao, and Vu \cite{CTV06} showed that the probability that a random symmetric matrix with $\pm{1}$ entries is singular is $o(1)$ and further work has improved this bound.

For a recent survey of results on random matrices, see \cite{Vu20}. In this paper, we extend the first line of work by analyzing the sixth moment of the determinant of a random matrix.
\subsection{The Fourth Moment of Random Determinants}
Before stating our main result, we describe the results of Nyquist, Rice, and Riordan \cite{NRR54} on the fourth moment of the determinant of a random matrix.
\begin{definition}
Given a distribution $\Omega$, we define $\mathcal{M}_{n \times n}(\Omega)$ to be the distribution of $n \times n$ matrices where each entry is drawn independently from $\Omega$.
\end{definition}
\begin{definition}
Given a distribution $\Omega$, we define $m_k$ to be the kth moment of $\Omega$, i.e., 
\[
m_k = E_{x \sim \Omega}[x^k].
\]
\end{definition}
\begin{definition}
We define $f_{k}(n) = E_{M \sim \mathcal{M}_{n \times n}(\Omega)}\left[\det (M)^{k}\right]$ to be the expected value of the $k$-th power of the determinant of a random $n\times n$ matrix. Similarly, we define $p_{k}(n)$ to be the expected value of the $k$-th power of the permanent of a random $n\times n$ matrix.
\end{definition}
\begin{remark}
Both $f_{k}(n)$ and $p_{k}(n)$ depend on the moments of $\Omega$, but we write $f_{k}(n)$ and $p_{k}(n)$ rather than $f_{k,\Omega}(n)$ and $p_{k,\Omega}(n)$ for brevity.
\end{remark}
Nyquist, Rice, and Riordan \cite{NRR54} showed that $f_4(n) = n!y_{n}$ where $y_n$ obeys the recurrence relation
\[
y_n = (n + m_4 - 1)y_{n-1} + (3-m_4)(n-1)y_{n-2}.
\]
where $y_0 = 1$ and $y_1 = m_4$. They further observed that if we take the generating function $Y(t) = \sum_{t = 0}^{\infty}{\frac{y_{n}t^{n}}{n!}}$ then $Y(t) = (1-t)^{-3}e^{(m_4-3)t}$. From this generating function, they found the equation
\[
f_4(n) = n!y_n =\frac{(n!)^2}{2}\sum^n_{k=0}\frac{(n-k+1)(n-k+2)}{k!}(m_4 - 3)^k.
\]
To prove their results, Nyquist, Rice, and Riordan \cite{NRR54} counted $4 \times n$ tables with certain properties. As we describe in Section \ref{sec:preliminaries}, we use the same general approach though our analysis is considerably more intricate.
\subsection{Our Results}
Our main results are as follows.

\begin{theorem}\label{genf_thm}
For any distribution $\Omega$ such that $m_1 = m_3 = 0$ and $m_2 = 1$, the formal generating function $F_6(t) = \sum_{n=0}^\infty\frac{t^n}{(n!)^2} f_6(n)$ for $f_6(n)$ is
\begin{equation*}
F_6(t) = \frac{e^{t \left(m_6-15 m_4 + 30\right)}}{48 \left(1+3t - m_4 t\right)\!{}^{15}} \sum _{i=0}^{\infty }
   \frac{(1+i) (2+i) (4+i)! t^i}{\left(1+3t -m_4 t\right){}^{3 i}}.
\end{equation*}
\end{theorem}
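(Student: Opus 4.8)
The plan is to begin from the Leibniz expansion of $\det(M)^6$ and pass to the expectation termwise. Writing
\begin{equation*}
f_6(n)=\sum_{\sigma_1,\dots,\sigma_6\in S_n}\Bigl(\prod_{k=1}^6\sgn(\sigma_k)\Bigr)\prod_{i=1}^n E\!\left[\prod_{k=1}^6 M_{i,\sigma_k(i)}\right],
\end{equation*}
independence of the entries of $M$ lets each matrix-row $i$ factor as $\prod_j m_{c_{ij}}$, where $c_{ij}=\#\{k:\sigma_k(i)=j\}$ is the number of the six permutations that select column $j$ in row $i$. Because $m_1=m_3=0$ and $m_0=m_2=1$, a tuple $(\sigma_1,\dots,\sigma_6)$ contributes nothing unless, in every row, the nonzero multiplicities $c_{ij}$ form one of the three partitions $\{6\}$, $\{4,2\}$, $\{2,2,2\}$ of $6$ into even parts, with weights $m_6$, $m_4$, and $1$ respectively. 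Equivalently, setting $A=\sum_{k=1}^6 P_{\sigma_k}$ for the superposition of the six permutation matrices, the surviving tuples are exactly those for which every entry of $A$ is even, and the weight is $m_6^{a_6}m_4^{a_4}$ with $a_6,a_4$ the number of entries equal to $6$ and $4$.

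The next step is to turn this signed sum into a weighted count of combinatorial objects, extending the $4\times n$ table method of Nyquist, Rice, and Riordan to six permutations. For each row I would record the coincidence pattern of the six threads, i.e.\ which of them share a column and into which partition type this falls, and encode a whole configuration as a labelled graph on the six threads whose admissible local types are the single ``all six agree'' pattern ($\{6\}$), the $\binom{6}{2}=15$ ``quadruple-plus-pair'' patterns ($\{4,2\}$), and the $15$ perfect matchings into three pairs ($\{2,2,2\}$). The constraint that each $\sigma_k$ is a bijection couples the rows globally; I would decouple it by inclusion-exclusion, first permitting arbitrary column choices and then correcting for repeated columns, carrying the $\sgn$ factors throughout. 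The hard part will be exactly this sign bookkeeping together with the classification of admissible transitions between column types: for six permutations the space of even superposition patterns is far richer than the two types that arise for the fourth moment, and one must show that the signed inclusion-exclusion does not merely yield a recurrence but collapses, after the cancellations, into a single closed generating function.

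Finally I would assemble and solve the resulting functional equation for $F_6(t)=\sum_n f_6(n)\,t^n/(n!)^2$. Guided by the fourth-moment result, whose generating function $\sum_n f_4(n)\,t^n/(n!)^2$ equals $(1-t)^{-3}e^{(m_4-3)t}$, I expect the locally ``connected'' diagonal contributions to exponentiate via the exponential formula, producing the factor $e^{(m_6-15m_4+30)t}$, while the bulk interaction of the pair-type blocks produces the rational factor $(1+3t-m_4t)^{-15}$ and the formal series $\sum_{i\ge 0}(1+i)(2+i)(4+i)!\,t^i/(1+3t-m_4t)^{3i}$. The exponent $15$ should track the $15$ pairings and quadruple-choices, and the coefficient $m_6-15m_4+30$ is consistent with the Gaussian case, where $m_4=3$ and $m_6=15$ make the exponent vanish and reduce $1+3t-m_4t$ to $1$. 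As a final check I would specialize to the Gaussian distribution, where the claimed formula must reduce to $f_6(n)=n!\,(n+2)!\,(n+4)!/48$, and match the remaining constants (the $48$, the $(4+i)!$, and the $(1+i)(2+i)$) against the low-order coefficients computed directly.
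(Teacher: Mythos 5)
Your opening step---passing to the Leibniz expansion, using $m_1=m_3=0$, $m_2=1$ to kill all terms except those where every column multiplicity pattern is $\{6\}$, $\{4,2\}$, or $\{2,2,2\}$, with weights $m_6$, $m_4$, $1$---is correct and is exactly the paper's reduction to signed, weighted even $6\times n$ tables (Proposition \ref{prop:tables}). But from that point on your proposal contains no argument, only a forecast of what you ``expect'' to happen, and the one concrete mechanism you do name is the wrong one. You propose to decouple the bijectivity constraint by inclusion--exclusion, ``first permitting arbitrary column choices and then correcting for repeated columns, carrying the $\sgn$ factors throughout.'' This cannot work as stated: $\sgn$ is only defined for bijections, so once you relax the permutation constraint there is no sign to carry, and the whole difficulty of the sixth moment (as opposed to the fourth, where determinant and permanent moments coincide) is precisely that the signs do not come along for free. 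Asserting that the answer ``should'' exponentiate into $e^{(m_6-15m_4+30)t}$ and that the pair blocks ``should'' produce $(1+3t-m_4t)^{-15}$ is pattern-matching against the target formula, not a derivation; checking the Gaussian specialization at the end verifies consistency but proves nothing.

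What is actually missing is the paper's entire engine. First, the inclusion--exclusion is not on bijectivity but on column types relative to the Gaussian baseline: one introduces pairings of identical column entries, proves $P_n=\sum_t \sgn(t)\,\abs{\mathcal{P}(t)}$ satisfies $P_n=n(n+2)(n+4)P_{n-1}$ (Lemma \ref{pairing}, itself requiring a delicate sign-preserving correspondence for the $\{2,2,2\}$ case), and then sums over which elements are forced into 6-columns or 4-columns with coefficients $(m_6-15)^{a}(m_4-3)^{b}$, so that each table's total contribution telescopes to $\sgn(t)\,m_6^{\#6\text{-cols}}m_4^{\#4\text{-cols}}$. Second, computing the resulting quantity $D_{n,a,b}$ requires a structural classification of the forced 4-columns by a directed graph whose components are cycles and paths into a center vertex, giving the cycle count $C_n=(n-1)(C_{n-1}+15C_{n-2})$ (i.e.\ derangements weighted by $15^{\#\text{cycles}}$, whose EGF $e^{-15x}/(1-x)^{15}$ supplies part of your exponential factor and the exponent $15$) and the path count $H_{n,j,a,b}$, which collapses via Chu--Vandermonde. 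Only after these counts are in hand does a reparametrization of the quadruple sum produce the closed generating function; nothing ``collapses after cancellations'' on its own. Without these components your proposal is a restatement of the problem in table language plus a guess at the shape of the answer.
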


\begin{remark}
Performing Taylor expansion of this generating function, we get the formula for computing the sixth moment of random determinants, namely:
\end{remark}

\begin{corollary}\label{main_cor}
For any distribution $\Omega$ such that $m_1 = m_3 = 0$ and $m_2 = 1$,
\begin{align*}
f_6(n)=(n!)^2 \sum_{j=0}^n \sum_{i=0}^j\frac{(1\!+\!i) (2\!+\!i) (4\!+\!i)! }{48 (n-j)!}\binom{14\!+\!j\!+\!2i}{j-i} \left(m_6-15m_4+30\right)^{n-j} \left(m_4-3\right)^{j-i}.
\end{align*}
\end{corollary}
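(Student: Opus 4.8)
The plan is to prove the corollary directly from Theorem~\ref{genf_thm} by extracting the coefficient of $t^n$ in the closed form for $F_6(t)$ and invoking the defining relation $f_6(n) = (n!)^2\,[t^n]\,F_6(t)$. To keep the bookkeeping clean, I would first abbreviate $a = m_6 - 15m_4 + 30$ and rewrite the recurring denominator as $1 + 3t - m_4 t = 1 + (3 - m_4)t$. The generating function then factors as a product of three tractable formal power series: the exponential $e^{at}$, the monomial prefactor $t^i$ sitting inside the sum, and the negative power $\bigl(1 + (3-m_4)t\bigr)^{-(15+3i)}$, all scaled by the constant $1/48$.

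The three expansions I would use are standard. First, $e^{at} = \sum_{k \ge 0} a^k t^k / k!$. Second, the generalized binomial theorem gives
\[
\frac{1}{\bigl(1 + (3-m_4)t\bigr)^{15 + 3i}} = \sum_{\ell \ge 0} \binom{-(15+3i)}{\ell}(3-m_4)^\ell t^\ell = \sum_{\ell \ge 0} \binom{14 + 3i + \ell}{\ell}(m_4 - 3)^\ell t^\ell,
\]
where the identity $\binom{-(15+3i)}{\ell} = (-1)^\ell\binom{14+3i+\ell}{\ell}$ together with the sign absorption $(-1)^\ell(3-m_4)^\ell = (m_4-3)^\ell$ produces the positive binomial coefficient and the factor $(m_4-3)^\ell$ that appear in the target formula. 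Multiplying the three series against the factor $t^i$ and the overall $1/48$, the coefficient of $t^n$ is the convolution over all triples with $i + k + \ell = n$:
\[
[t^n]\,F_6(t) = \frac{1}{48} \sum_{i + k + \ell = n} (1+i)(2+i)(4+i)!\,\frac{a^k}{k!}\,\binom{14 + 3i + \ell}{\ell}(m_4 - 3)^\ell .
\]

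The final step is a reindexing: setting $j = i + \ell$, equivalently $\ell = j - i$ and $k = n - j$, converts the single constraint $i + k + \ell = n$ into the nested ranges $0 \le i \le j \le n$ of the corollary, collapses the binomial's top entry to $14 + 3i + (j-i) = 14 + j + 2i$, and turns $a^k/k!$ into $(m_6 - 15m_4 + 30)^{n-j}/(n-j)!$. Multiplying through by $(n!)^2$ then reproduces the stated formula exactly. I anticipate no genuine obstacle, since every manipulation is at the level of formal power series and convergence never enters; the only points demanding care are the sign convention in the negative-binomial expansion (so that $(m_4-3)^{j-i}$ emerges with the correct sign) and confirming that the substitution $j = i + \ell$ maps the convolution region bijectively onto the triangle $0 \le i \le j \le n$.
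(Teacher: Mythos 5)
Your proposal is correct and follows essentially the same route as the paper, which obtains Corollary \ref{main_cor} precisely by Taylor-expanding the generating function of Theorem \ref{genf_thm}; you have simply made explicit the three series expansions, the sign bookkeeping in the negative-binomial step, and the reindexing $j = i + \ell$, $k = n - j$, all of which check out.
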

\begin{remark}
If $m_2 \neq 1$ then we can scale the distribution $\Omega$ by $\frac{1}{\sqrt{m_2}}$ (which changes the determinant of matrices in $\mathcal{M}_{n \times n}(\Omega)$ by a factor of $\left(\frac{1}{\sqrt{m_2}}\right)^{n}$) and then apply the result in Corollary \ref{main_cor}.
\end{remark}
\begin{remark}
If $\Omega = N(0,1)$ then $m_4 = 3$ and $m_6 = 15$ so $f_6(n) = P_n = \frac{n!(n+2)!(n+4)!}{48}$, which is a special case of the result that $f_k(n) = \prod_{j=0}^{\frac{k}{2}-1}{\frac{(n + 2j)!}{(2j)!}}$ when $\Omega = N(0,1)$ and $k$ is even.
\end{remark}
Another generalization is when $m_3 \neq 0$.
\begin{theorem}\label{m3nonzero_thm}
For any distribution $\Omega$ such that $m_1 = 0$ and $m_2 = 1$, the formal generating function $F_6(t) = \sum_{n=0}^\infty\frac{t^n}{(n!)^2} f_6(n)$ for $f_6(n)$ is
\begin{align*}
F_6(t) =\left(1+m_3^2t\right)\!{}^{10}\, \frac{e^{t \left(m_6-10 m_3^2-15m_4 + 30\right)}}{48 \left(1+3t-m_4t\right)^{15}} \sum _{i=0}^{\infty }
   \frac{(1+i) (2+i) (4+i)!t^i}{\left(1+3t-m_4t\right){}^{3 i}}.
\end{align*}
\end{theorem}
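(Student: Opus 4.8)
The plan is to deduce Theorem~\ref{m3nonzero_thm} from Theorem~\ref{genf_thm} by isolating the role of $m_3$ and showing that it contributes exactly the separable factor $(1+m_3^2t)^{10}e^{-10m_3^2t}$. Expanding the determinant in the same permutation/cell-multiplicity framework underlying Theorem~\ref{genf_thm}, I would write
\[
f_6(n)=\sum_{\sigma_1,\dots,\sigma_6\in S_n}\ \prod_{k=1}^6\sgn(\sigma_k)\ \prod_{(a,b)} m_{c_{ab}},\qquad c_{ab}=\#\{k:\sigma_k(a)=b\},
\]
so each cell $(a,b)$ contributes the moment $m_{c_{ab}}$ of its multiplicity. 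Since $m_1=0$, every used cell has multiplicity at least $2$; since the multiplicities in each row and each column sum to $6$, a multiplicity-$5$ cell would leave a residual of $1$ to be split among cells of multiplicity $\ge 2$, which is impossible. This is precisely why $m_5$ never appears. The only partitions of $6$ into parts $\ge 2$ are $6,\ 4{+}2,\ 3{+}3,\ 2{+}2{+}2$, so a multiplicity-$3$ cell forces its row and its column to be of type $\{3,3\}$, i.e.\ to contain exactly two multiplicity-$3$ cells and nothing else.

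The first key step is a \emph{decoupling}. Let $R_3$ be the rows and $C_3$ the columns containing a multiplicity-$3$ cell. The forcing above shows every $R_3$-row has both its cells in $C_3$ and vice versa, so $|R_3|=|C_3|=:j$ and each $\sigma_k$ restricts to a bijection $R_3\to C_3$ and a bijection of the complements; the even multiplicities ($2,4,6$) live entirely on the complementary $(n-j)\times(n-j)$ block. I would check that $\sgn(\sigma_k)=\varepsilon(R_3,C_3)\,\sgn(\sigma_k|_{R_3})\,\sgn(\sigma_k|_{\overline{R_3}})$ where the block-interleaving sign $\varepsilon(R_3,C_3)$ depends only on the sets; since it occurs to the sixth (even) power in $\prod_k\sgn(\sigma_k)$, it cancels, so the signs factor. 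Summing over the $\binom{n}{j}^2$ choices of $R_3,C_3$ then turns $f_6(n)$ into a convolution, and under the normalization $F_6(t)=\sum_n \tfrac{t^n}{(n!)^2}f_6(n)$ this convolution becomes a product $F_6(t)=G(t)\,F_6^{(0)}(t)$, where $F_6^{(0)}(t)$ is the generating function of Theorem~\ref{genf_thm} (the pure-even block, with the same $m_4,m_6$) and $G(t)=\sum_{j\ge0}\tfrac{A_j}{(j!)^2}t^j$ collects the all-multiplicity-$3$ blocks.

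It then remains to compute $G$. The multiplicity-$3$ cells form a $2$-regular bipartite graph on $R_3\cup C_3$, hence a disjoint union of cycles, and by the exponential formula $G(t)=\exp\!\big(\sum_{k\ge2}\tfrac{W_k}{(k!)^2}t^k\big)$, where $W_k$ is the total signed weight of a single connected $k$-cycle on labeled $[k]\times[k]$. On one $2k$-cycle each $\sigma_k$ must use one of the cycle's two perfect matchings $M_0$ (identity, sign $+1$) or $M_1$ (a $k$-cycle, sign $(-1)^{k-1}$), and the $\{3,3\}$ condition forces exactly three permutations onto each matching; this gives $\binom{6}{3}=20$ internal choices, each of sign $((-1)^{k-1})^3=(-1)^{k-1}$ and weight $m_3^{2k}$. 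Multiplying by the $\tfrac{(k-1)!\,k!}{2}$ bipartite cycle structures on $[k]\times[k]$ yields $W_k=10\,(k-1)!\,k!\,(-1)^{k-1}m_3^{2k}$, so $\log G(t)=\sum_{k\ge2}\tfrac{W_k}{(k!)^2}t^k=10\sum_{k\ge2}\tfrac{(-1)^{k-1}}{k}(m_3^2t)^k=10\big(\log(1+m_3^2t)-m_3^2t\big)$, giving $G(t)=(1+m_3^2t)^{10}e^{-10m_3^2t}$ and hence the claimed formula. I expect the main obstacle to be making this last paragraph fully rigorous: correctly justifying the two-sort exponential formula in the presence of the $\tfrac1{(j!)^2}$ weighting and, above all, the sign bookkeeping—tracking $\sgn(M_1)=(-1)^{k-1}$ across the cycle components and confirming that the interleaving signs $\varepsilon(R_3,C_3)$ cancel—so that the clean uniform constant $10$ (i.e.\ $\tfrac12\binom{6}{3}$) and the exponential correction $e^{-10m_3^2t}$ emerge exactly.
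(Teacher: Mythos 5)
Your argument is correct, and its overarching strategy is the same as the paper's: Section 4 of the paper performs exactly your decoupling, grouping each table by its 3-columns (your multiplicity-$3$ cells), obtaining $f_6^*(n)=\sum_j \binom{n}{j}^2 A_j\, f_6(n-j)$ where $A_j$ is the signed, weighted sum over all-3-column blocks, and converting this convolution into the product $F_6^*(t)=G(t)F_6(t)$ under the $t^n/(n!)^2$ normalization. Where you genuinely diverge is in evaluating $A_j$. You stay with the two-sort object (rows and columns both labeled): decompose the $2$-regular bipartite graph of multiplicity-$3$ cells into cycles, invoke the two-sort exponential formula, and count $\tfrac{k!(k-1)!}{2}$ Hamiltonian cycle structures on $K_{k,k}$ times $\binom{6}{3}=20$ matching assignments with sign $(-1)^{k-1}$, then sum the logarithm. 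The paper instead normalizes by fixing the first row of the 3-column subtable to the identity (extracting the factor $j!$), which collapses the two-sort structure to an ordinary permutation: each block then encodes a derangement $\pi\in D_j$ with $10=\binom{5}{2}$ fillings per cycle and sign $(-1)^{j-C(\pi)}$, so $A_j=j!\,m_3^{2j}(-1)^j\sum_{\pi\in D_j}(-10)^{C(\pi)}$, and the cited bivariate generating function for derangements by cycle count (Lemma~\ref{LemFlajo}) yields $(1+m_3^2t)^{10}e^{-10m_3^2t}$ immediately. The two computations are equivalent---your $10=\tfrac{1}{2}\binom{6}{3}$ and the paper's $10=\binom{5}{2}$ count the same objects, with the $M_0/M_1$ symmetry broken by fixing row 1---but the paper's normalization buys exemption from precisely the bookkeeping you flag as delicate (the $(j!)^2$ weighting in the multisort exponential formula and the interleaving signs), whereas your version is self-contained and exhibits the ``set of signed $10$-weighted cycles'' structure explicitly rather than outsourcing it to a quoted lemma. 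The concerns you raise are not genuine gaps: every interleaving sign, both for the split $(R_3,C_3)$ versus its complement and for the splitting of $G$ into cycle components, is common to all six permutations and therefore enters to an even power, so it cancels exactly as you anticipate.
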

\begin{corollary}
For any distribution $\Omega$ such that $m_1 = 0$ and $m_2 = 1$,
\begin{align*}
f_6(n) = (n!)^2 \sum _{j=0}^n \sum _{i=0}^j \sum_{k=0}^{n-j} \frac{(1+i) (2+i) (4+i)! }{48 (n-j-k)!}\binom{10}{k}\binom{14+j+2i}{j-i} q_6^{n-j-k} q_4^{j-i}q_3^k,
\end{align*}
where
\begin{equation*}
q_6 = m_6-10 m_3^2-15m_4 + 30, \qquad\qquad q_4 = m_4-3, \qquad\qquad q_3 = m_3^2.
\end{equation*}
\end{corollary}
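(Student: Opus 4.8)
The plan is to derive this formula directly from \Cref{m3nonzero_thm} by extracting the coefficient of $t^n$ in the generating function $F_6(t)$, since by definition $f_6(n) = (n!)^2\,[t^n]F_6(t)$. The first move is to rewrite $F_6(t)$ using the abbreviations $q_3 = m_3^2$, $q_4 = m_4-3$, and $q_6 = m_6 - 10m_3^2 - 15m_4 + 30$ introduced in the statement. With these, the denominator becomes $1 + 3t - m_4 t = 1 - q_4 t$, the exponential becomes $e^{q_6 t}$, and the prefactor becomes $(1+q_3 t)^{10}$, so that after absorbing the $(1-q_4t)^{-15}$ into the summand,
\[
F_6(t) = \frac{(1+q_3 t)^{10}\, e^{q_6 t}}{48}\sum_{i=0}^\infty \frac{(1+i)(2+i)(4+i)!\, t^i}{(1-q_4 t)^{3i+15}}.
\]

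Next I would expand each of the three analytic ingredients as a power series in $t$. The prefactor gives $(1+q_3 t)^{10} = \sum_{k\ge 0}\binom{10}{k} q_3^k t^k$ by the binomial theorem; the exponential gives $e^{q_6 t} = \sum_{\ell\ge 0} \frac{q_6^\ell}{\ell!} t^\ell$; and the generalized binomial series gives $(1-q_4 t)^{-(3i+15)} = \sum_{s\ge 0}\binom{3i+14+s}{s} q_4^s t^s$ for each fixed $i$. Multiplying the series and collecting the coefficient of $t^n$ (a convolution over the four nonnegative indices $i,k,\ell,s$ subject to $i+k+\ell+s=n$) yields
\[
[t^n]F_6(t) = \frac{1}{48}\sum_{\substack{i,k,\ell,s\ge 0\\ i+k+\ell+s=n}} (1+i)(2+i)(4+i)!\binom{10}{k}\binom{3i+14+s}{s}\frac{q_6^\ell}{\ell!}\, q_4^s\, q_3^k .
\]

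The final step is to reconcile this with the stated triple sum by re-indexing. I would set $j = i+s$ (so $s = j-i$ records the negative-binomial index) and $\ell = n-j-k$ (forced by $i+k+\ell+s=n$); the constraints $i,s,\ell\ge 0$ then translate exactly into $0\le i\le j$, $0\le k\le n-j$, and $0\le j\le n$, matching the ranges in the corollary. Under this substitution one checks the identity $\binom{3i+14+s}{s} = \binom{14+j+2i}{j-i}$, since $3i+14+s = 14+2i+(i+s) = 14+j+2i$ while $s=j-i$, and likewise $\frac{q_6^\ell}{\ell!} = \frac{q_6^{\,n-j-k}}{(n-j-k)!}$, which produces precisely the claimed expression after multiplying by $(n!)^2$. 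I do not expect a genuine obstacle: the mathematical content is entirely the coefficient extraction already packaged in \Cref{m3nonzero_thm}, and the only care required is tracking the four summation indices and the single negative-binomial coefficient identity. As a consistency check, specializing to $q_3 = 0$ collapses the $k$-sum to its $k=0$ term and recovers \Cref{main_cor}.
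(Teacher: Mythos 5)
Your proposal is correct and follows exactly the route the paper intends: the corollary is obtained by Taylor expansion of the generating function in Theorem \ref{m3nonzero_thm}, i.e., rewriting it in terms of $q_3$, $q_4$, $q_6$, expanding the binomial, exponential, and negative-binomial factors, and extracting $[t^n]$ via the re-indexing $s=j-i$, $\ell=n-j-k$. The coefficient identity $\binom{3i+14+s}{s}=\binom{14+j+2i}{j-i}$ and the index ranges all check out, and your $q_3=0$ consistency check against Corollary \ref{main_cor} is valid.
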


Below, we show the values of $f_k(n)$ and $p_k(n)$ when $\Omega = \{-1,1\}$ for small values of $k$ and $n$. We note that when $\Omega = \{-1,1\}$, $f_4(n)$ is the integer sequence A052127 in the On-Line Encyclopedia of Integer Sequences \cite{oeis_f4}. In the entry for this integer sequence, it is noted that $f_4(n) \sim (n!)^2\frac{(n^2+7n+10)}{(2e^2)}$ as $n \to \infty$.
\begin{table}[H]
\centering
\begin{tabular}{|l|l|l|l|l|l|l|}
\hline
n & $f_2(n)$ & $f_4(n)$ & $f_6(n)$ & $p_2(n)$ & $p_4(n)$ & $p_6(n)$  \\ \hline
1 & 1        & 1        & 1        & 1        & 1        & 1         \\ \hline
2 & 2        & 8        & 32       & 2        & 8        & 32        \\ \hline
3 & 6        & 96       & 1536     & 6        & 96       & 2976      \\ \hline
4 & 24       & 2112     & 282624   & 24       & 2112     & 513024    \\ \hline
5 & 120      & 68160    & 66846720 & 120      & 68160    & 157854720 \\ \hline
\end{tabular}
\caption{Values of $f_k(n)$ and $p_k(n)$ for $\Omega = \{-1,1\}$, $k \leq 6$, and $n \leq 5$}
\label{values-det-per}
\end{table}
\begin{remark}
    Note that for all $n \in \mathbb{N}$, $p_2(n) = f_2(n)$ and $p_4(n) = f_4(n)$. However, for $n \geq 3$, $p_6(n) > f_6(n)$.
\end{remark}
We can describe the asymptotic behavior of $f_6$ using the following asymptotic expansion.
\begin{theorem}\label{asym_thm}
For all $R \in \mathbb{N} \cup \{0\}$,
\[
f_6(n) = \frac{e^{3q_4}(n!)^2}{48} \left(\sum_{k=0}^R c_k (n+6-k)!\right) {\pm}O\left((n!)^2(n+6-R-1)!\right),
\]
where the coefficients $c_k$ are the Taylor expansion coefficients of the function $C(t) = \sum_{k\geq 0} c_k t^k$,
\begin{align*}
C(t) = \, & e^{(q_6-3 q_4^2)t+ q_4^3 t^2 } \left(1+q_3 t\right)^{10} \bigg{(} 1-2 \left(3 q_4+4\right) t \\
& +3 \left(5 q_4^2+8 q_4+4\right) t^2-4 \left(q_4^2 \left(5 q_4+6\right)\right) t^3+q_4^3 \left(15 q_4+8\right) t^4-6
   q_4^5 t^5+q_4^6 t^6 \bigg{)}.
\end{align*}
\end{theorem}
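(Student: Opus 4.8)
The plan is to start from the closed-form generating function of Theorem~\ref{m3nonzero_thm} and read off the factorial-scale asymptotics by isolating the ``boundary layer'' $i \approx n$ in the coefficient extraction. First I would record the substitutions $q_4 = m_4-3$, $q_3 = m_3^2$, and $q_6 = m_6-10m_3^2-15m_4+30$, under which $1+3t-m_4t = 1-q_4t$ and the generating function collapses to
\[
48\,F_6(t) = \frac{(1+q_3t)^{10}e^{q_6 t}}{(1-q_4t)^{15}}\sum_{i\ge 0}(1+i)(2+i)(4+i)!\left(\frac{t}{(1-q_4t)^3}\right)^{\!i}.
\]
Since each power $t^i/(1-q_4t)^{3i}$ contributes to $[t^n]$ only for $i \le n$, extracting coefficients yields the finite single-index sum
\[
\frac{48\,f_6(n)}{(n!)^2} = \sum_{i=0}^{n}(1+i)(2+i)(4+i)!\,[t^{\,n-i}]\frac{(1+q_3t)^{10}e^{q_6t}}{(1-q_4t)^{3i+15}},
\]
which is the collapsed form of the corollary following Theorem~\ref{m3nonzero_thm}. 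The factorial growth is driven entirely by $(1+i)(2+i)(4+i)! = (i+1)(i+2)(i+4)!$, which is largest when $i$ is close to $n$.

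Next I would reindex by $\ell = n-i$ and expand. Writing $A(t) = (1+q_3t)^{10}e^{q_6t} = \sum_m a_m t^m$, the inner coefficient equals $\sum_{m=0}^{\ell}a_m\binom{3n-2\ell-m+14}{\ell-m}q_4^{\ell-m}$, a polynomial in $n$ of degree $\ell$, while the factorial prefactor becomes $(n-\ell+1)(n-\ell+2)(n-\ell+4)!$. Each summand is thus a polynomial in $n$ times a shifted factorial, and re-expanding that polynomial in the factorial basis $\{(n+6-k)!\}_{k\ge 0}$ reorganizes the whole sum as $\sum_{k\ge 0}c_k^{(n)}(n+6-k)!$. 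The leading behaviour is transparent: the $m=0$ contributions give $(n-\ell+1)(n-\ell+2)(n-\ell+4)!\binom{3n-2\ell+14}{\ell}q_4^\ell \sim (n+6)!\,(3q_4)^\ell/\ell!$, so summing over $\ell$ already produces the prefactor $e^{3q_4}(n+6)!$, matching $c_0 = C(0) = 1$. As a sanity check, in the Gaussian case $q_3=q_4=q_6=0$ only $\ell=m=0$ survives, and the identity $(n+1)(n+2)(n+4)! = (n+6)!-8(n+5)!+12(n+4)!$ reproduces $C(t)=1-8t+12t^2$ together with the exact value $f_6(n)=n!(n+2)!(n+4)!/48$.

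Then I would carry this resummation to all orders and prove that the coefficient of $(n+6-k)!$ converges as $n\to\infty$ to the $k$-th Taylor coefficient $c_k$ of the stated $C(t)$. Organizing by the power of each $q$-parameter, the sum over $m$ of the $a_m$ reconstructs the analytic factor $(1+q_3t)^{10}e^{q_6t}$, the geometric resummation over $\ell$ of the leading binomial pieces yields $e^{3q_4}$, and the subleading corrections from both $\binom{3n-2\ell-m+14}{\ell-m}$ and the polynomial $(n-\ell+1)(n-\ell+2)$ (re-expressed against the factorial basis) supply the remaining degree-six polynomial factor together with the shift $q_6 \mapsto q_6-3q_4^2$ and the second-order term $q_4^3t^2$ in the exponent. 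Finally, truncating the factorial expansion at order $R$ and bounding both the neglected higher-order corrections for each fixed $\ell$ and the tail $\ell \gtrsim n$ of the boundary layer — the latter super-exponentially small on the scale $(n+6-R-1)!$ — yields the error term $\pm O((n!)^2(n+6-R-1)!)$ after restoring the $(n!)^2/48$ prefactor.

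I expect the main obstacle to be the exact identification of $C(t)$: showing that the triple bookkeeping — re-expanding $(n-\ell+1)(n-\ell+2)(n-\ell+4)!$ in the factorial basis, the $n$-asymptotics of the binomial coefficient, and the Taylor data of $(1+q_3t)^{10}e^{q_6t}$ — collapses to exactly $e^{(q_6-3q_4^2)t+q_4^3t^2}(1+q_3t)^{10}$ times that specific polynomial. The delicate point is that the $3i$ in the exponent $(1-q_4t)^{3i+15}$ couples the summation index to the pole, so the naive factor $e^{q_6t}$ acquires the corrections $-3q_4^2t$ and $+q_4^3t^2$; pinning these down, while simultaneously proving the remainder is uniformly of the claimed factorial order, is where the real work lies.
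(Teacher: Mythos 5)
Your route is genuinely different from the paper's. You extract coefficients directly from the generating function of Theorem~\ref{m3nonzero_thm}, reindex by $\ell=n-i$, and propose to reorganize the resulting finite sum in the factorial basis $\{(n+6-k)!\}$, proving convergence of each basis coefficient and a uniform tail bound by hand. The paper instead invokes Borinsky's calculus of factorially divergent series: it writes $F_6(t)=h(t)\,f(g(t))$ with $f(t)=\sum_i(1+i)(2+i)(4+i)!\,t^i$, $g(t)=t/(1-q_4t)^3$, and $h$ analytic, notes that $(1+i)(2+i)(4+i)!=\Gamma(i+7)-8\Gamma(i+6)+12\Gamma(i+5)$ so that $f$ is factorially divergent of type $(1,7)$ with $(\mathcal{A}^1_7f)(t)=1-8t+12t^2$, and then applies the product and composition rules of Lemma~\ref{lem:borinsky}. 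The composition rule mechanically produces the factor $\left(t/g(t)\right)^7e^{1/t-1/g(t)}$; since $1/t-1/g(t)=3q_4-3q_4^2t+q_4^3t^2$ and $(t/g(t))^7=(1-q_4t)^{21}$, one reads off everything at once: $e^{3q_4}$ is the prefactor, $e^{q_6t}$ becomes $e^{(q_6-3q_4^2)t+q_4^3t^2}$, and $(1-q_4t)^{21}(1-q_4t)^{-15}\bigl(1-8g(t)+12g(t)^2\bigr)=(1-q_4t)^6-8t(1-q_4t)^3+12t^2$ is exactly the stated polynomial. In other words, the two corrections you flagged as the delicate point are precisely what the cited lemma hands you for free.

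The gap is that your proposal stops at the step that constitutes the theorem. You verify only $c_0$ (and the degenerate Gaussian case), and then declare that identifying all $c_k$ with the Taylor coefficients of the specific $C(t)$, while proving the remainder is uniformly $O((n+6-R-1)!)$, is ``where the real work lies.'' Both items are the actual content of Theorem~\ref{asym_thm}, and neither follows from the leading-order computation you performed. The uniformity issue in particular is not routine: in your reorganized sum the coefficient of $(n+6-k)!$ receives contributions from every pair $(\ell,m)$ with $\ell\geq k-2$ and $m\leq k$, and $k$ ranges up to roughly $n$, so bounding the discarded part $\sum_{k>R}B_k^{(n)}(n+6-k)!$ by $O((n+6-R-1)!)$ requires control of an infinite triangular array of partial sums uniformly in both $k$ and $n$. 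Your plan could in principle be completed---it amounts to reproving, in this special case, the composition rule for the operator $\mathcal{A}^1_7$ that the paper cites from Borinsky---but as written it is a program plus two consistency checks, not a proof.
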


\begin{remark}
For the first terms in the expansion, we have
\begin{align*}
f_6(n) & \sim \frac{e^{3 m_4-9}}{48} (n!)^3 \bigg{(}n^6 + \left(m_6-3 m_4^2-3 m_4+34\right) n^5 +\frac{1}{2} \left(m_6^2-10 m_3^4\right.\\
& \left. +9 m_4^4+20 m_4^3-183 m_4^2-126
   m_4-6 m_4^2 m_6-6 m_4 m_6+56 m_6+905\right) n^4 + \cdots \bigg{)}    
\end{align*}    
\end{remark}

\begin{remark}
Note that when $\Omega = \{-1,1\}$, as $n \to \infty$,
\[
f_6(n) \sim \frac{(n!)^3}{48
   e^6}\left(n^6+29 n^5+335 n^4+\frac{5861 n^3}{3}+\frac{17944 n^2}{3}+\frac{44036 n}{5}+\frac{167536}{45}-\frac{210176}{63 n}\right).
\]
\end{remark}

\section{Preliminaries}\label{sec:preliminaries}
To prove Theorem \ref{genf_thm}, we need a few definitions and a key lemma.
\begin{definition}
Given natural numbers $k$ and $n$ where $k$ is even, we define an even $k \times n$ table to be a $k \times n$ table where each row is a permutation of $[n]$ and each column contains each number an even number of times. We define $T_{k,n}$ to be the set of all even $k \times n$ tables.
\end{definition}
\begin{definition}
Given an even table $t$ of size $k\times n$, we define its sign $\sgn(t)$ to be the product of the signs of its rows, which are permutations of $[n]$.
\end{definition}
\begin{definition}
Given a column $c$ where each element is in $[n]$, we define its weight $w(c)$ to be 
\[
w(c) = \prod_{j=1}^{n}{m_{\# \text{ of times } j \text{ appears in column c}}}
.\]
For even $6 \times n$ tables, we say that a column is a 6-column if it contains some number $6$ times, a 4-column if it contains one number four times and another number two times, and a 2-column if it contains three different numbers two times. Observe that the weight of a 6-column is $m_6$, the weight of a 4-column is $m_4$, and the weight of a 2-column is $m_2$.
\end{definition}
\begin{definition}
Given an even $k\times n$ table $t$, we define its weight $w(t)$ to be the product of the weights of its columns.
\end{definition}
We can use the following proposition to reduce the problem of finding the sixth moment of a random determinant to a combinatorial problem.
\begin{proposition}\label{prop:tables}
For all even $k \in \mathbb{N}$, $f_k(n)=\sum_{t\in T_k(n)}\sgn(t)w(t)$ and 
$p_k(n)=\sum_{t\in T_k(n)}w(t)$.
\end{proposition}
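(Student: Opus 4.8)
The plan is to expand the determinant and permanent by their Leibniz-type definitions, raise to the $k$th power, take the expectation entrywise using independence, and then recognize the surviving terms as weighted even tables. First I would write $\det(M)=\sum_{\pi\in S_n}\sgn(\pi)\prod_{i=1}^n M_{i,\pi(i)}$ and $\mathrm{perm}(M)=\sum_{\pi\in S_n}\prod_{i=1}^n M_{i,\pi(i)}$. Expanding the $k$th power of the determinant gives
\[
\det(M)^k=\sum_{(\pi_1,\dots,\pi_k)\in S_n^k}\Bigl(\prod_{r=1}^k\sgn(\pi_r)\Bigr)\prod_{r=1}^k\prod_{i=1}^n M_{i,\pi_r(i)},
\]
and the same expression without the sign factor for $\mathrm{perm}(M)^k$.

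Next I would apply linearity of expectation to this finite sum and evaluate each term. For a fixed tuple $(\pi_1,\dots,\pi_k)$, the monomial $\prod_{r,i}M_{i,\pi_r(i)}$ assigns to each entry $M_{i,j}$ the exponent $a_{i,j}:=\lvert\{r:\pi_r(i)=j\}\rvert$, and since the entries of $M$ are independent, $E\bigl[\prod_{i,j}M_{i,j}^{a_{i,j}}\bigr]=\prod_{i,j}m_{a_{i,j}}$.

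The combinatorial core is to identify each tuple with a table. I would associate to $(\pi_1,\dots,\pi_k)$ the $k\times n$ table $t$ whose $r$th row is $\pi_r$, i.e.\ whose entry in row $r$ and column $i$ is $\pi_r(i)$; this is a bijection onto the set of $k\times n$ tables all of whose rows are permutations of $[n]$. Under this correspondence each row of $t$ is a permutation, the number of occurrences of the value $j$ in column $i$ of $t$ is exactly $a_{i,j}$, so $\prod_{i,j}m_{a_{i,j}}=\prod_{i=1}^n w(\text{column }i)=w(t)$, and $\prod_r\sgn(\pi_r)=\sgn(t)$ by definition. Summing over all such $t$ then yields $f_k(n)=\sum_t\sgn(t)\,w(t)$ and $p_k(n)=\sum_t w(t)$.

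It remains to discard the non-even tables, and this is the step I expect to be the crux. If $t$ is not even, some column contains a value an odd number of times, contributing a factor $m_\ell$ with $\ell$ odd to $w(t)$; under the hypotheses in force (notably $m_1=0$, and $m_3=0$ in the setting of Theorem \ref{genf_thm}, which together annihilate every odd moment that can occur when $k=6$) this factor is $0$, so $w(t)=0$ for every non-even $t$. Deleting these vanishing terms leaves exactly $\sum_{t\in T_k(n)}\sgn(t)\,w(t)$ and $\sum_{t\in T_k(n)}w(t)$. The main obstacle is twofold: keeping the index bookkeeping correct---matching matrix row $i$ with table column $i$ and matrix column $j$ with the table value $j$, so that the even-column condition precisely encodes each matrix entry appearing an even number of times---and justifying the restriction to even tables, which is not automatic for arbitrary $\Omega$ but depends on the vanishing of the relevant odd moments.
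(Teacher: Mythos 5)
Your proof is correct and takes essentially the same route as the paper's: expand $\det(M)^k$ and $\mathrm{perm}(M)^k$ over $k$-tuples of permutations, use linearity of expectation and independence to factor each term column-by-column into moments, and identify each tuple with a $k\times n$ table whose signed weight is $\sgn(t)w(t)$. You are in fact more explicit than the paper on the one delicate point---the paper's proof silently restricts to even tables, whereas you correctly observe that discarding the non-even tables requires the vanishing of the relevant odd moments (for $k=6$, $m_1=m_3=0$ suffices, matching the hypotheses of Theorem \ref{genf_thm}).
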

\begin{proof} 
We observe that 
\[
f_k(n) = E_{A \sim \mathcal{M}_{n \times n}(\Omega)}\left[\sum_{\pi_1,\pi_2,...,\pi_k\in S_n}{\left(\prod_{i=1}^{k}\sgn (\pi_i)\right)\prod_{p=1}^{n}{ \left(\prod_{q=1}^{k}A_{p,\pi_q(p)}\right)}}\right]
\]
and 
\[
p_k(n) = E_{A \sim \mathcal{M}_{n \times n}(\Omega)}\left[\sum_{\pi_1,\pi_2,...,\pi_k\in S_n}{\prod_{p=1}^{n}{ \left(\prod_{q=1}^{k}A_{p,\pi_q(p)}\right)}}\right]
.\]
For each $p \in [n]$, we have that $E_{A \sim \mathcal{M}_{n \times n}(\Omega)}[\prod_{q=1}^{k}A_{p,\pi_q(p)}] = w(p)$ (i.e., the weight of column $p$), so $f_k(n)=\sum_{t\in T_k(n)}\sgn(t)w(t)$ and $p_k(n)=\sum_{t\in T_k(n)}w(t)$, as needed.
\end{proof}
Thus, computing the kth moment of a random determinant is equivalent to summing the signed weights of all even tables of size $k\times n$.
\begin{corollary}
If $\Omega$ is the uniform Bernoulli distribution (i.e., the uniform distribution on $\{-1,1\}$) then $f_k(n)=\sum_{t\in T_{k,n}}\sgn(t)$ and $p_k(n)=|T_{k,n}|$.
\end{corollary}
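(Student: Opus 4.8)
The plan is to specialize Proposition \ref{prop:tables} to the uniform Bernoulli distribution, for which every even table turns out to have weight exactly $1$. The key computation is the moment sequence of $\Omega$: if $x$ is uniform on $\{-1,1\}$ then $x^{2a} = 1$ for every integer $a \ge 0$, so $m_{2a} = E_{x \sim \Omega}[x^{2a}] = 1$, while the odd moments vanish. In particular, every even-indexed moment equals $1$ (with the usual convention $m_0 = 1$).

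Next I would recall the definition of the weight of a column, $w(c) = \prod_{j=1}^n m_{(\#\text{ of times } j \text{ appears in } c)}$. Since $t \in T_{k,n}$ is an even table, every column contains each value $j \in [n]$ an even number of times, so each factor appearing in $w(c)$ is an even-indexed moment and hence equals $1$ by the previous step (the values $j$ that do not appear contribute $m_0 = 1$). Therefore $w(c) = 1$ for every column $c$, and since $w(t)$ is by definition the product of its column weights, $w(t) = 1$ for every $t \in T_{k,n}$.

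Finally, substituting $w(t) = 1$ into Proposition \ref{prop:tables} collapses the weighted sums into counts: $f_k(n) = \sum_{t \in T_{k,n}} \sgn(t)\, w(t) = \sum_{t \in T_{k,n}} \sgn(t)$ and $p_k(n) = \sum_{t \in T_{k,n}} w(t) = \abs{T_{k,n}}$, as claimed.

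There is essentially no obstacle here: the entire content is that the uniform Bernoulli distribution has all even moments equal to $1$, which turns the signed (resp.\ unsigned) weighted enumerations of Proposition \ref{prop:tables} into pure signed (resp.\ unsigned) counts of even tables. The only point requiring a moment's care is to confirm that even tables only ever invoke even-indexed moments, which is immediate from the defining property that each column contains every value an even number of times.
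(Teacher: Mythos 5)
Your proof is correct and follows exactly the route the paper intends: the corollary is stated immediately after Proposition \ref{prop:tables} with no written proof precisely because, as you observe, all even moments of the uniform Bernoulli distribution equal $1$, so every even table has weight $1$ and the weighted sums collapse to signed and unsigned counts. Your only addition is spelling out this one-line verification (including the $m_0=1$ convention), which is a faithful expansion of the paper's implicit argument.
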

\begin{corollary}
If $k=2$, $k=4$, or $n \leq 2$ then $p_k(n)=f_k(n)$. If $n \geq 3$, $k \geq 6$, and $k$ is even then $p_k(n)>f_k(n)$.
\end{corollary}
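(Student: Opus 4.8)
The plan is to reduce both claims to the sign structure of even tables and then treat the two directions separately. By Proposition~\ref{prop:tables}, we have $p_k(n) - f_k(n) = \sum_{t \in T_{k,n}} (1 - \sgn(t))\,w(t) = 2\sum_{t \in T_{k,n},\,\sgn(t) = -1} w(t)$. In an even table each number occurs an even number of times in each column, so every column weight, and hence every $w(t)$, is a product of even moments $m_{2j} = E[x^{2j}] \ge 0$ and is therefore nonnegative. Consequently $p_k(n) \ge f_k(n)$ always, with equality exactly when no even table has both negative sign and positive weight, and strict inequality exactly when some even table does. I would note that for the uniform distribution on $\{-1,1\}$ every relevant moment equals $1$, and more generally $m_{2j} > 0$ for any non-degenerate $\Omega$, so ``positive weight'' is automatic for the explicit tables constructed below; the entire corollary thus reduces to analyzing $\sgn(t)$.

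For the equality cases I would argue as follows. If $k=2$, column-evenness forces $\pi_1(p) = \pi_2(p)$ for all $p$, so $\pi_1 = \pi_2$ and $\sgn(t) = \sgn(\pi_1)^2 = 1$. If $n \le 2$, every row is either the identity or the transposition $(1\,2)$, and column-evenness forces the number of transposition-rows to be even, whence $\sgn(t) = (-1)^{\#\text{transpositions}} = 1$. The substantive case is $k=4$, where I would write
\[
\sgn(t) = \sgn(\pi_1)\sgn(\pi_2)\sgn(\pi_3)\sgn(\pi_4) = \sgn(\pi_1\pi_2^{-1})\,\sgn(\pi_3\pi_4^{-1})
\]
and show the two factors are equal, so that $\sgn(t)$ is a perfect square and hence $+1$.

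To compare $\pi_1\pi_2^{-1}$ and $\pi_3\pi_4^{-1}$, I would classify each column as an \emph{agreement} column (rows $1,2$ share a value, equivalently rows $3,4$ share a value) or a \emph{disagreement} column, and observe that each value $v$ lies either in one column (occurring four times) or in exactly two columns (twice each), the two row-pairs carrying $v$ then forming one of the three perfect matchings of $\{1,2,3,4\}$. I would build the graph $\Gamma$ on the value set $[n]$ whose edges are the disagreement columns, each joining the two distinct values appearing there. One checks that every vertex of $\Gamma$ has degree $0$ or $2$, so $\Gamma$ is a disjoint union of cycles, and that each cycle is homogeneous, pairing rows either always as $\{1,3\},\{2,4\}$ or always as $\{1,4\},\{2,3\}$. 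Both $\pi_1\pi_2^{-1}$ and $\pi_3\pi_4^{-1}$ fix all non-active values and traverse precisely the cycles of $\Gamma$; on $\{1,3\},\{2,4\}$-cycles they induce the same orientation, and on $\{1,4\},\{2,3\}$-cycles they induce opposite orientations. Since reversing a cycle preserves its length, the two permutations share the same cycle type and hence the same sign, giving $\sgn(t)=1$. I expect this correspondence—in particular verifying homogeneity of the $\Gamma$-cycles and tracking the orientation switch between the two matching types—to be the main technical obstacle; the rest is bookkeeping.

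For the strict inequality, with $n \ge 3$ and even $k \ge 6$, I would exhibit a single even table of sign $-1$. Take the six rows given by the six permutations of $\{1,2,3\}$ (each extended to fix $4,\dots,n$), together with $k-6$ additional rows equal to the identity of $[n]$. Each of columns $1,2,3$ then contains each of $1,2,3$ an even number of times, and each column $j>3$ contains $j$ exactly $k$ times, so the table is even, and its sign is $\big(\prod_{\sigma \in S_3}\sgn(\sigma)\big)\cdot(+1)^{k-6} = -1$, since $S_3$ has three even and three odd permutations. This table has positive weight, so the reduction above yields $p_k(n) - f_k(n) \ge 2\,w(t) > 0$, completing the proof.
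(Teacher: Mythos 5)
Your proposal is correct, and it actually supplies more than the paper does: the paper states this corollary without any proof, treating it as immediate from Proposition \ref{prop:tables} together with the classical facts that $f_2(n)=p_2(n)$ and $f_4(n)=p_4(n)$ (the latter going back to Nyquist, Rice, and Riordan) and the existence of a negatively signed even table once $k\geq 6$ and $n\geq 3$. Your framing $p_k(n)-f_k(n)=2\sum_{t\in T_{k,n},\,\sgn(t)=-1}w(t)\geq 0$ is the right reduction, and your two substantive pieces both check out. For $k=4$, the graph $\Gamma$ on values whose edges are the ``disagreement'' columns is indeed $2$-regular on its support (if $v$ occupies a split row-pair in one column, its remaining two occurrences occupy the complementary split pair in another column, which is then also a disagreement column), its cycles are homogeneous in type, and $\pi_1\pi_2^{-1}$, $\pi_3\pi_4^{-1}$ traverse each cycle either identically (type $\{1,3\},\{2,4\}$) or in opposite directions (type $\{1,4\},\{2,3\}$); either way the cycle types agree, so $\sgn(t)=\sgn(\pi_1\pi_2^{-1})\sgn(\pi_3\pi_4^{-1})=+1$. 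This gives a self-contained combinatorial proof of $p_4(n)=f_4(n)$ rather than a citation. For the strict inequality, the six permutations of $S_3$ padded with $k-6$ identity rows form an even table of sign $-1$ and weight $m_{k-4}\,m_2^2$ on each of the first three columns and $m_k$ on the rest, which is positive for any non-degenerate $\Omega$. That last caveat is worth emphasizing, since the paper leaves it implicit: the strict inequality genuinely requires $\Omega$ not to be a point mass at $0$ (otherwise both sides vanish), and your observation that all even moments are positive for non-degenerate $\Omega$, combined with the weights of even tables being products of even moments, is exactly what makes both the inequality $p_k(n)\geq f_k(n)$ and its strictness go through.
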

To analyze $f_6(n)$, it is useful to consider tables together with pairings of identical elements in each column.
\begin{definition}
Given an even $k \times n$ table $t$, we define a pairing $P$ on $t$ to be a set of matchings $\{M_i: i \in [n]\}$, one for each column, where each matching $M_i$ pairs up identical elements of column $i$. We define $\mathcal{P}(t)$ to be the set of all pairings on $t$.
\end{definition}
\begin{example} The table on the left below is an even $6\times 4$ table with $27$ possible \textit{pairings}. The table on the right shows one of the $27$ possible parings. 
\begin{table}[H]
\centering
\begin{tabular}{cc}
\begin{tabular}{|l|l|l|l|}
\hline
1 & 2 & 4 & 3 \\ \hline
1 & 2 & 4 & 3 \\ \hline
1 & 3 & 4 & 2 \\ \hline
1 & 3 & 4 & 2 \\ \hline
2 & 4 & 1 & 3 \\ \hline
2 & 4 & 1 & 3 \\ \hline
\end{tabular}&

\begin{tabular}{|l|l|l|l|}
\hline
\rowcolor[HTML]{FD6864} 
1 & {\color[HTML]{000000} 2} & 4 & \cellcolor[HTML]{FFFE65}3 \\ \hline
\rowcolor[HTML]{FD6864} 
1 & {\color[HTML]{000000} 2} & \cellcolor[HTML]{FFFE65}{\color[HTML]{000000} 4} & {\color[HTML]{000000} 3} \\ \hline
\rowcolor[HTML]{FFFE65} 
\cellcolor[HTML]{FCFF2F}{\color[HTML]{000000} 1} & 3 & {\color[HTML]{000000} 4} & \cellcolor[HTML]{67FD9A}{\color[HTML]{000000} 2} \\ \hline
\cellcolor[HTML]{FCFF2F}{\color[HTML]{000000} 1} & \cellcolor[HTML]{FFFE65}3 & \cellcolor[HTML]{FD6864}4 & \cellcolor[HTML]{67FD9A}{\color[HTML]{000000} 2} \\ \hline
\rowcolor[HTML]{67FD9A} 
2 & 4 & 1 & \cellcolor[HTML]{FD6864}{\color[HTML]{000000} 3} \\ \hline
\rowcolor[HTML]{67FD9A} 
2 & 4 & 1 & \cellcolor[HTML]{FFFE65}3 \\ \hline
\end{tabular}
\end{tabular}
\end{table}
\end{example}
\begin{proposition}
For each even $6 \times n$ table $t$, 
$$
|\mathcal{P}(t)| = 15^{\# \mathrm{\ of\ 6-columns\ in\ t}}3^{\# \mathrm{\ of\ 4-columns\ in\ t}}
.$$
\end{proposition}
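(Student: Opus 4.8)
The plan is to reduce the count to a two-level product: first over columns, and then, within each column, over the distinct values that occur. The engine of the computation is the elementary fact that a set of $2m$ elements admits exactly $(2m-1)!!$ perfect matchings.

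First I would unfold the definition: a pairing $P \in \mathcal{P}(t)$ is by definition a set of matchings $\{M_i : i \in [n]\}$, one for each column, chosen independently of one another. Hence by the multiplication principle $|\mathcal{P}(t)| = \prod_{i=1}^{n} N_i$, where $N_i$ denotes the number of matchings of column $i$ that pair up identical elements. It therefore suffices to compute $N_c$ for a single column $c$ as a function of its type.

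Next I would fix a column $c$ and group its six entries into blocks by value. Because $t$ is even, every value that occurs in $c$ does so an even number of times, so each block has even size $2m$. Any valid matching must pair each position only with a position holding the same value, so it restricts to a perfect matching within each block; conversely, any independent choice of perfect matchings on the individual blocks assembles into a valid matching of $c$. Thus $N_c = \prod_{\text{blocks } B}(\text{number of perfect matchings of } B)$, and a block of size $2m$ contributes the double factorial $(2m-1)!! = (2m-1)(2m-3)\cdots 3 \cdot 1$ (pick a partner for the first element, then recurse). Evaluating the three column types: a $2$-column splits into three blocks of size $2$, each contributing $1!!=1$, giving $N_c = 1$; a $4$-column has one size-$4$ block contributing $3!!=3$ and one size-$2$ block contributing $1$, giving $N_c = 3$; and a $6$-column is a single size-$6$ block contributing $5!!=15$, giving $N_c = 15$. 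Substituting into the product over columns yields $|\mathcal{P}(t)| = 15^{\#\,6\text{-columns}}\,3^{\#\,4\text{-columns}}\,1^{\#\,2\text{-columns}}$, which is exactly the stated formula.

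There is no genuine obstacle in this argument; it is a direct double application of the multiplication principle. The only two points requiring a word of care are (i) confirming that a column matching factors across its value-blocks, which is immediate since positions holding distinct values can never be paired, and (ii) correctly computing the small double factorials $(2m-1)!!$ for $m=1,2,3$, namely $1,3,15$.
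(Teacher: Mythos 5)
Your proof is correct; the paper states this proposition without proof, treating it as immediate, and your argument (independence of the column matchings together with the double-factorial counts $5!!=15$, $3!!=3$, $1!!=1$ for the value blocks) is exactly the standard justification that the paper leaves implicit. Nothing is missing.
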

\begin{definition}
We define $P_n = \sum_{t \in T_{k,n}}{\sgn(t)|\mathcal{P}(t)|}$.
\end{definition}
It turns out that $P_n$ can be easily computed and this is crucial for our results.
\begin{lemma}\label{pairing}
For all $n \in \mathbb{N}$, $P_{n}=n(n+2)(n+4)P_{n-1}$ where $P_0=1$.
\end{lemma}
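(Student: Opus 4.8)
The plan is to prove the recurrence by induction on $n$, peeling off the largest value $n$ from the table. First I would unfold the definition: since $|\mathcal{P}(t)|$ merely counts pairings,
\[
P_n = \sum_{t \in T_{6,n}} \sgn(t)\,|\mathcal{P}(t)| = \sum_{(t,P)} \sgn(t),
\]
where the sum now ranges over all pairs $(t,P)$ with $t$ an even $6\times n$ table and $P\in\mathcal{P}(t)$. The base case $P_0=1$ is the empty table. To get the recurrence I focus on value $n$: it occurs once in each of the six rows, so $t$ has exactly six cells holding $n$, one per row, say row $q$ in column $c_q=\pi_q^{-1}(n)$. Because a pairing only matches equal entries and a column is even, $P$ must match these six cells among themselves into three pairs; writing this matching of the row set $\{1,\dots,6\}$ as $\mu=\{\{a_j,b_j\}:j=1,2,3\}$, matched cells share a column, so $c_{a_j}=c_{b_j}=:d_j$. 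The columns $d_1,d_2,d_3$ need not be distinct, and the number of distinct values among them records exactly whether the copies of $n$ form three separate $2$-column contributions, a $4$- and a $2$-column, or one $6$-column, i.e.\ the three cases $|\{d_j\}|\in\{3,2,1\}$.

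Next I would delete value $n$ and contract to an even $6\times(n-1)$ table $(t',P')$, tracking the sign change through the inversion count of $n$: if $n$ occupies position $c$ of a permutation $\sigma$ of $[n]$ built from $\sigma'\in S_{n-1}$, then $\sgn(\sigma)=(-1)^{n-c}\sgn(\sigma')$, so removing $n$ from all six rows multiplies the sign by $(-1)^{\sum_q (n-c_q)}=(-1)^{\sum_q c_q}$. The content of the lemma then becomes the statement that, summed over the choice of $\mu$ and of the columns $d_1,d_2,d_3$ (equivalently, over all ways of reinserting the six copies of $n$ into a fixed $(t',P')$), the signed number of preimages is exactly $n(n+2)(n+4)$, independent of which $(t',P')$ we started from.

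The mechanism I expect is that this signed count factors over the three row-pairs, each contributing one of $n$, $n+2$, $n+4$, the successive increments of $2$ reflecting that each already-inserted pair occupies two new cells and thus shifts the number of admissible insertion positions for the next pair by two, while the alternating signs $(-1)^{c_q}$ are what collapse the otherwise unwieldy case analysis into this clean product. The main obstacle is precisely this reduction step: when $d_1,d_2,d_3$ are not all equal, deleting $n$ empties cells spread over two or three columns, and re-assembling the leftover entries into a genuine even $6\times(n-1)$ table with an induced pairing requires a compression whose well-definedness, fibers, and sign contribution must all be controlled. I would organize the whole step as a single signed sum over insertion positions and show that the cross-terms between different column-sharing patterns cancel, leaving the product $n(n+2)(n+4)$; proving that this cancellation is exact across the three cases is the crux of the argument.

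As an independent sanity check (not part of the induction), one can rewrite the column-matching count via Wick's theorem: the number of ways to match the equal entries of a column equals the corresponding product of Gaussian moments, whence $P_n=E\!\left[(\det \mathbf{G})^6\right]$ for an $n\times n$ matrix $\mathbf{G}$ of i.i.d.\ standard Gaussians. This identifies $P_n$ with the Gaussian sixth determinant moment $\tfrac{n!(n+2)!(n+4)!}{48}$ and confirms that $n(n+2)(n+4)$ is the right multiplier, guiding the bookkeeping above.
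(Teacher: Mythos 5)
Your setup coincides with the paper's direct proof of this lemma: write $P_n$ as a signed sum over pairs $(t,P)$, locate the six copies of $n$, note that any pairing must match them among themselves into three column-sharing pairs, and split into the three cases $|\{d_j\}|\in\{1,2,3\}$; the sign bookkeeping $(-1)^{\sum_q (n-c_q)}$ for deleting $n$ is also correct. But the proposal stops exactly where the real work begins. What you yourself call ``the crux'' --- that the signed count of reinsertions collapses to $n(n+2)(n+4)$ independently of $(t',P')$ --- is precisely what the paper has to prove, and it does so not by the mechanism you conjecture but by evaluating the three cases separately: the 6-column case contributes $15n\,P_{n-1}$, the 4-column case $9n(n-1)P_{n-1}$, and the three-distinct-columns case $n(n-1)(n-2)P_{n-1}$, and the product $n(n+2)(n+4)$ emerges only after summing. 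So there is no factorization ``over the three row-pairs, each contributing one of $n$, $n+2$, $n+4$,'' and there are no cross-case cancellations to establish: the three patterns' contributions simply add, while all of the delicate sign cancellation happens \emph{inside} the third case. That case cannot be waved through: the paper proves a separate correspondence lemma asserting that the signed number of $6\times 3$ tables obtained by distributing the six $n$'s over three columns (displacing and recirculating the old entries) equals exactly $6$ times the signed number of the corresponding $6\times 2$ tables, and its proof is an exhaustive analysis of the four possible multigraph structures formed by the row-pairs $S_1,\dots,S_6$ --- including a genuinely subtle subcase (two $3$-cycles) in which the $6\times 2$ count is zero and the $6\times 3$ tables must be shown to cancel in sign exactly. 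Nothing in your outline produces this lemma or any substitute for it, so the main argument has a genuine gap.

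A final remark: your ``sanity check'' is in fact closer to a complete proof than your inductive plan. By Wick's theorem, $P_n = E\left[(\det \mathbf{G})^6\right]$ for an $n\times n$ matrix $\mathbf{G}$ of i.i.d.\ standard Gaussians, and the recurrence then follows from the classical formula $E\left[(\det \mathbf{G})^6\right] = n!(n+2)!(n+4)!/48$. This is exactly the reduction the paper itself states just after the lemma (citing the known Gaussian moment result) before giving the self-contained combinatorial proof in the appendix. Had you promoted that observation from sanity check to proof, citing the Gaussian result, you would have a correct (if non-self-contained) argument; as written, the route you actually propose is incomplete at its central step.
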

This lemma follows from the fact that when $\Omega = N(0,1)$ and $k$ is even, the kth moment of the determinant is $\prod_{j=0}^{\frac{k}{2}-1}{\frac{(n + 2j)!}{(2j)!}}$. We give a direct proof of this lemma in Appendix \ref{sec:directproof}.

Note that $P_n = \sum_{t \in T_{k,n}}{\sgn(t)15^{\# \text{ of 6-columns in t}}3^{\# \text{ of 4-columns in t}}}$ while \\
$f_6(n) = \sum_{t \in T_{k,n}}{\sgn(t){m_6}^{\# \text{ of 6-columns in t}}{m_4}^{\# \text{ of 4-columns in t}}}$. If $\Omega = N(0,1)$ (or we at least have that $m_2 = 1$, $m_4 = 3$, and $m_6 = 15$) then $f_6(n) = P_n$. 
In the next section, we show how to handle other distributions $\Omega$ using inclusion/exclusion.
\section{Proof of Theorem \ref{genf_thm}}\label{sec:maintheoremproof}
Before preceding to the proof of Theorem \ref{genf_thm}, we first prove the following result on the sixth moment of random determinants. 
\begin{lemma} \label{main_lemma}
    For any distribution $\Omega$ such that $m_1 = m_3 = 0$ and $m_2 = 1$,
\begin{align*}
    f_6(n)&=\sum_{j=0}^{n}{\sum_{a=0}^{j}\binom{n}{j}\binom{j}{a}(m_6 - 15)^{a} (m_4 - 3)^{(j-a)}D_{n,a,j-a}}.\\
    D_{n,a,b}&=\left(\prod_{j=0}^{a+b-1}{(n-j)}\right)\left(\sum_{i=0}^{b}\binom{b}{i}C_{i}H_{n,b-i,a,b}\right)P_{n-a-b}.\\
    P_{n}&=n(n+2)(n+4)P_{n-1} \text{ where } P_{0} = 1 \text{. Equivalently, } P_{n} = \frac{n!(n+2)!(n+4)!}{2!4!}.\\
    C_{n}&=(n-1)(C_{n-1}+15C_{n-2}) \text{ where } C_0 = 1 \text{ and } C_1 = 0.\\
    H_{n,j,a,b}&=\sum_{x=1}^{j}{\frac{\binom{j-1}{x-1}}{x!}j!\prod_{y=0}^{x-1}(3(n-a-b)-y).}
\end{align*}
\end{lemma}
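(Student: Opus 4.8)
The plan is to combine Proposition~\ref{prop:tables} with an inclusion--exclusion that measures each distribution against the Gaussian case, for which the signed weights are exactly the pairing counts $15$ and $3$. Under the hypotheses $m_1=m_3=0$ and $m_2=1$, the only tables contributing to $f_6(n)=\sum_{t\in T_{6,n}}\sgn(t)w(t)$ are the genuinely even ones, in which every column is a $2$-, $4$-, or $6$-column of weight $1$, $m_4$, or $m_6$; writing $a_6(t)$ and $a_4(t)$ for the numbers of $6$- and $4$-columns, we have $w(t)=m_6^{a_6(t)}m_4^{a_4(t)}$. The first step is to expand each factor by the binomial theorem against the pairing weights,
\[
m_6^{a_6}=\sum_{a}\binom{a_6}{a}(m_6-15)^a\,15^{\,a_6-a},\qquad m_4^{a_4}=\sum_{b}\binom{a_4}{b}(m_4-3)^b\,3^{\,a_4-b},
\]
and to substitute into $f_6(n)$. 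Since $15$ and $3$ are precisely the numbers of pairings of a $6$-column and a $4$-column (and a $2$-column has a unique pairing), the leftover factors $15^{a_6-a}3^{a_4-b}$ count the pairings of the \emph{unmarked} columns. After interchanging summation, $f_6(n)$ becomes $\sum_{a,b}(m_6-15)^a(m_4-3)^b$ multiplied by a signed count over even tables equipped with a choice of $a$ marked $6$-columns, $b$ marked $4$-columns, and a pairing of every remaining column.

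Setting $j=a+b$, I would then show this inner signed count equals $\binom{n}{j}\binom{j}{a}D_{n,a,b}$, where $D_{n,a,b}$ is the same count with the marked columns pinned to fixed positions. This is legitimate because permuting columns replaces each row $\pi_q$ by $\pi_q\circ\tau$ and multiplies $\sgn(t)$ by $\sgn(\tau)^6=1$, so the count does not depend on which positions carry the marked columns. It then remains to evaluate $D_{n,a,b}$, which I would factor into three essentially independent pieces. First, the principal values (the value appearing six times in a marked $6$-column and four times in a marked $4$-column) must be distinct across all $a+b$ marked columns: each value occurs exactly six times in the table (once per row), whereas two shared principal values would force at least eight occurrences. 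This yields the falling factorial $\prod_{j=0}^{a+b-1}(n-j)=n(n-1)\cdots(n-a-b+1)$ of injective assignments. Second, a marked $6$-column is completely determined by its principal value and, by another $\sgn(\cdot)^6=1$ argument on the deleted value, is sign-neutral, so the $n-a-b$ free positions form an even $6\times(n-a-b)$ table contributing the pairing count $P_{n-a-b}$ of Lemma~\ref{pairing}.

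The third and most intricate piece is the secondary structure of the $b$ marked $4$-columns, each carrying a value that appears twice, and this is the origin of the correction factor $\sum_{i=0}^{b}\binom{b}{i}C_i\,H_{n,b-i,a,b}$. I would split the $b$ four-columns according to whether their secondary values close up among the marked columns themselves or instead reach into the pairing cycles of the free part, selecting the $i$ internally closed ones in $\binom{b}{i}$ ways. For the $m:=b-i$ that reach into the free part, I would distribute their secondary values among the $3(n-a-b)$ pairs of the free columns (three per free column); grouping the values into $x$ ordered blocks and assigning the blocks to distinct free pairs produces the Lah-number weight $\binom{m-1}{x-1}m!/x!$ together with the falling factorial $\prod_{y=0}^{x-1}(3(n-a-b)-y)$, which summed over $x$ is exactly $H_{n,b-i,a,b}$. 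The internally closed configurations, in which secondary values chain the marked $4$-columns into cycles, I would enumerate by a deletion recurrence on the newest $4$-column: it either pairs off with one of the other $i-1$ columns to form a closed two-element block carrying the weight $15$ of a degenerate $6$-column and leaving $C_{i-2}$, or it inserts into an existing block in one of $i-1$ ways leaving $C_{i-1}$, giving $C_i=(i-1)(C_{i-1}+15C_{i-2})$ with $C_0=1$ and $C_1=0$.

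The main obstacle is the sign bookkeeping that binds these pieces together. I must verify that $\sgn(t)$ factors as a neutral contribution from the $6$-columns, the sign already packaged inside $P_{n-a-b}$ for the free part, and the signs carried by the $4$-column cycles and attachments, and that the latter are precisely what turn the \emph{unsigned} counts into the stated weighted recurrence for $C_i$ and the Lah-type formula $H$. Confirming that a closed two-element block contributes the factor $15$ with the correct sign, and that insertion contributes $i-1$ with the correct sign, is the delicate heart of the argument; by contrast the base cases $C_0=1$, $C_1=0$ (a lone $4$-column cannot close on itself) and the identification of the free part with $P_{n-a-b}$ are routine. Assembling the falling factorial, the factor $P_{n-a-b}$, and the sum over $i$ then gives $D_{n,a,b}$, and substituting back yields the claimed expression for $f_6(n)$.
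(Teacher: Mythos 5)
Your proposal follows essentially the same route as the paper's proof: the same binomial expansion of $m_6^{a_6}$ and $m_4^{a_4}$ against the Gaussian pairing weights $15$ and $3$, the same decomposition of the marked 4-columns into cycles and paths (the paper's directed graph $G(t)$ with vertices $B \cup \{\cter\}$), the same Lah-number count $\binom{m-1}{x-1}m!/x!$ of path structures paired with the falling factorial $\prod_{y=0}^{x-1}(3(n-a-b)-y)$ of distinct free pairs, and the same deletion recurrence $C_i=(i-1)(C_{i-1}+15C_{i-2})$. The sign bookkeeping you flag as the delicate heart is precisely the point the paper also relies on but largely asserts (the surgeries "automatically" preserve $\sgn(t)$ and the pairing, since each cycle or path modifies two rows by permutations of equal sign), so your plan matches the paper's argument in both structure and substance.
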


\begin{proof}
The idea behind the proof is as follows. We consider the tables where we know that some set $A \subseteq [n]$ of elements appear six times in a 6-column and another set $B \subseteq [n] \setminus A$ of elements appear four times in a 4-column and two times in a different column. We do not know whether the elements in $[n] \setminus (A \cup B)$ appear six times in a 6-column, appear four times in a 4-column and two times in a different column, or appear two times in three different columns. We consider these tables together with pairings for the columns which are unaccounted for by $A$ and $B$ (i.e., the columns which don't contain six of the same element in $A$ or four of the same element in $B$). 

To obtain $f_6(n)$, we compute the contribution from each $A$ and $B$ and then take an appropriate linear combination of these contributions so that the contribution from each individual table $t$ is $\sgn(t)w(t)$.
\begin{definition}
Given $A \subseteq [n]$ and $B \subseteq [n] \setminus A$, we define $D_{n,A,B}$ to be the set of tables in $T_{6,n}$ such that the elements in $A$ appear six times in a 6-column and the elements in $B$ appear four times in a 4-column and two times in a different column.

For each $t \in D_{n,A,B}$, we define  $\mathcal{P}_{A,B}(t)$ to be the set of pairings on $t$ where we exclude the 6-columns which contain six of the same element in $A$ and the 4-columns which include four of the same element in $B$.
\end{definition}
By symmetry, the contribution from each $D_{n,A,B}$ only depends on $|A|$ and $|B|$.
\begin{definition}
Given $n,a,b \in \mathbb{N} \cup \{0\}$ such that $a+b \leq n$, we define $D_{n,a,b}$ to be 
\[
D_{n,a,b} = \sum_{t \in D_{n,A,B}}{\sgn(t)|\mathcal{P}_{A,B}(t)|}
\]
where $A \subseteq [n]$, $B \subseteq [n] \setminus A$, $|A| = a$, and $|B| = b$.
\end{definition}
\begin{lemma}
For all $n \in \mathbb{N} \cup \{0\}$, $f_6(n) = \sum_{j=0}^{n}{\sum_{a=0}^{j}\binom{n}{j}\binom{j}{a}(m_6 - 15)^{a}(m_4 - 3)^{(j-a)}D_{n,a,j-a}}.$
\end{lemma}
\begin{proof}
Observe that 
\begin{align*}
&\sum_{j=0}^{n}{\sum_{a=0}^{j}\binom{n}{j}\binom{j}{a}(m_6 - 15)^{a}(m_4 - 3)^{(j-a)}D_{n,a,j-a}} \\
&=\sum_{A \subseteq [n]}{\sum_{B \subseteq [n] \setminus A}{\sum_{t \in D_{n,A,B}}{(m_6 - 15)^{|A|}(m_4 - 3)^{|B|}\sgn(t)|\mathcal{P}_{A,B}(t)|}}}.
\end{align*}
Given a table $t \in T_{6,n}$, let $A'$ be the set of elements in $[n]$ which appear six times in a 6-column of $t$ and let $B'$ be the set of element which appear four times in a 4-column of $t$. Now consider the contribution from $t$ in 
\[
\sum_{A \subseteq [n]}{\sum_{B \subseteq [n] \setminus A}{\sum_{t \in D_{n,A,B}}{(m_6 - 15)^{|A|}(m_4 - 3)^{|B|}\sgn(t)|\mathcal{P}_{A,B}(t)|}}}.
\]
We have that whenever $A \subseteq A'$ and $B \subseteq B'$, $t \in D_{n,A,B}$ and $|\mathcal{P}_{A,B}(t)| = 15^{|A' \setminus A|}3^{|B' \setminus B|}$. Thus, the contribution from $t$ is
\[
\sum_{A \subseteq A'}{\sum_{B \subseteq B'}(m_6 - 15)^{|A|}(m_4 - 3)^{|B|}15^{|A' \setminus A|}3^{|B' \setminus B|}\sgn(t)} = {m_6}^{|A'|}{m_4}^{|B'|}\sgn(t) = \sgn(t)w(t).
\]
This implies that 
\[
\sum_{j=0}^{n}{\sum_{a=0}^{j}\binom{n}{j}\binom{j}{a}(m_6 - 15)^{a} (m_4 - 3)^{(j-a)}D_{n,a,j-a}} = \sum_{t \in T_{6,n}}{\sgn(t)w(t)} = f_6(n)
\]
as needed.
\end{proof}
We now compute $D_{n,a,b}$.
\begin{lemma}
For all $n,a,b \in \mathbb{N} \cup \{0\}$ such that $a+b \leq n$,
\[
    D_{n,a,b}=\left(\prod_{j=0}^{a+b-1}{(n-j)}\right)\left(\sum_{i=0}^{b}\binom{b}{i}C_{i}H_{n,b-i,a,b}\right)P_{n-a-b}
\]
where $C_n$ is given by the recurrence relation $C_{n}=(n-1)(C_{n-1}+15C_{n-2})$, $C_0 = 1$, $C_1 = 0$ and 
\[
    H_{n,j,a,b}=\sum_{x=1}^{j}{\frac{\binom{j-1}{x-1}}{x!}j!\prod_{y=0}^{x-1}(3(n-a-b)-y)}.
\]
\end{lemma}
\begin{proof}
To prove this lemma, we group the tables in $D_{n,A,B}$ based on the structure of the 4-columns containing four of the same element of $B$.
\begin{definition}
Given a table $t \in D_{n,A,B}$, we define the directed graph $G(t)$ to be the graph with vertices $B \cup \{center\}$ and the following edges. For each $b \in B$, 
\begin{enumerate}
    \item If there is a $b' \in B \setminus \{b\}$ such that there are two $b$ in the 4-column containing four $b'$ then we add an edge from $b$ to $b'$.
    \item If there is no such $b'$ then we add an edge from $b$ to $\cter$.
\end{enumerate}
\end{definition}
\begin{proposition}
For all $t \in D_{n,A,B}$, $G(t)$ has the following properties.
\begin{enumerate}
    \item For all $b \in B$, $\deg^+(b)=1$ and $\deg^-(b) \leq 1$.
    \item $\deg^+(\cter) = 0$.
\end{enumerate}
\end{proposition}
\begin{corollary}
For all $t \in D_{n,A,B}$, $G(t)$ consists of directed cycles and paths which end at $\cter$, all of which are disjoint except for their common endpoint.
\end{corollary}
\begin{example} This example shows the correspondence between a table $t$ and $G(t)$.
\begin{figure}[H]
\centering
  \begin{minipage}[b]{0.4\textwidth} \centering
\begin{tabular}{|l|l|l|l|l|}
\hline
1 & 2 & 3 & 4 & 5 \\ \hline
1 & 2 & 3 & 4 & 5 \\ \hline
1 & 2 & 5 & 3 & 4 \\ \hline
1 & 2 & 5 & 3 & 4 \\ \hline
2 & 1 & 3 & 4 & 5 \\ \hline
2 & 1 & 3 & 4 & 5 \\ \hline
\end{tabular}
\caption{A $6\times 5$ table $t \in D_{5,\emptyset,[5]}$}
\end{minipage}
  \hfill
    \begin{minipage}[b]{0.4\textwidth} \centering

\tikzset{every picture/.style={line width=0.75pt}} 

\begin{tikzpicture}[x=0.75pt,y=0.75pt,yscale=-0.5,xscale=0.5]

\draw   (173,149) .. controls (173,135.19) and (184.19,124) .. (198,124) .. controls (211.81,124) and (223,135.19) .. (223,149) .. controls (223,162.81) and (211.81,174) .. (198,174) .. controls (184.19,174) and (173,162.81) .. (173,149) -- cycle ;
\draw   (42,149) .. controls (42,135.19) and (53.19,124) .. (67,124) .. controls (80.81,124) and (92,135.19) .. (92,149) .. controls (92,162.81) and (80.81,174) .. (67,174) .. controls (53.19,174) and (42,162.81) .. (42,149) -- cycle ;
\draw    (67,124) .. controls (96.7,84.79) and (165.6,87.33) .. (197.06,122.91) ;
\draw [shift={(198,124)}, rotate = 229.74] [color={rgb, 255:red, 0; green, 0; blue, 0 }  ][line width=0.75]    (10.93,-3.29) .. controls (6.95,-1.4) and (3.31,-0.3) .. (0,0) .. controls (3.31,0.3) and (6.95,1.4) .. (10.93,3.29)   ;
\draw    (198,174) .. controls (164.34,213) and (97.36,215.35) .. (67.88,175.23) ;
\draw [shift={(67,174)}, rotate = 54.98] [color={rgb, 255:red, 0; green, 0; blue, 0 }  ][line width=0.75]    (10.93,-3.29) .. controls (6.95,-1.4) and (3.31,-0.3) .. (0,0) .. controls (3.31,0.3) and (6.95,1.4) .. (10.93,3.29)   ;
\draw   (444,188) .. controls (444,174.19) and (455.19,163) .. (469,163) .. controls (482.81,163) and (494,174.19) .. (494,188) .. controls (494,201.81) and (482.81,213) .. (469,213) .. controls (455.19,213) and (444,201.81) .. (444,188) -- cycle ;
\draw   (313,188) .. controls (313,174.19) and (324.19,163) .. (338,163) .. controls (351.81,163) and (363,174.19) .. (363,188) .. controls (363,201.81) and (351.81,213) .. (338,213) .. controls (324.19,213) and (313,201.81) .. (313,188) -- cycle ;
\draw   (376,92) .. controls (376,78.19) and (387.19,67) .. (401,67) .. controls (414.81,67) and (426,78.19) .. (426,92) .. controls (426,105.81) and (414.81,117) .. (401,117) .. controls (387.19,117) and (376,105.81) .. (376,92) -- cycle ;
\draw    (376,92) -- (338.94,161.24) ;
\draw [shift={(338,163)}, rotate = 298.16] [color={rgb, 255:red, 0; green, 0; blue, 0 }  ][line width=0.75]    (10.93,-3.29) .. controls (6.95,-1.4) and (3.31,-0.3) .. (0,0) .. controls (3.31,0.3) and (6.95,1.4) .. (10.93,3.29)   ;
\draw    (469,163) -- (427.04,93.71) ;
\draw [shift={(426,92)}, rotate = 58.8] [color={rgb, 255:red, 0; green, 0; blue, 0 }  ][line width=0.75]    (10.93,-3.29) .. controls (6.95,-1.4) and (3.31,-0.3) .. (0,0) .. controls (3.31,0.3) and (6.95,1.4) .. (10.93,3.29)   ;
\draw    (363,188) -- (442,188) ;
\draw [shift={(444,188)}, rotate = 180] [color={rgb, 255:red, 0; green, 0; blue, 0 }  ][line width=0.75]    (10.93,-3.29) .. controls (6.95,-1.4) and (3.31,-0.3) .. (0,0) .. controls (3.31,0.3) and (6.95,1.4) .. (10.93,3.29)   ;

\draw (61,142) node [anchor=north west][inner sep=0.75pt]   [align=left] {1};
\draw (191,142) node [anchor=north west][inner sep=0.75pt]   [align=left] {2};
\draw (332,181) node [anchor=north west][inner sep=0.75pt]   [align=left] {3};
\draw (462,181) node [anchor=north west][inner sep=0.75pt]   [align=left] {4};
\draw (395,85) node [anchor=north west][inner sep=0.75pt]   [align=left] {5};

\end{tikzpicture}
\caption{The associated $G(t)$
}
\end{minipage}

\end{figure}
\end{example}
We now consider how many ways there are to start with a table $t \in T_{6,n-a-b}$ together with a pairing $P \in \mathcal{P}$ and construct a table $t' \in D_{n,A,B}$ (we will automatically have that the sign of $t$ and the pairing $P$ is preserved). Before giving the entire analysis, we describe the parts of the analysis corresponding to the cycles and paths of $G(t')$ as these are the trickiest parts of the analysis.
\begin{definition}
Define $C_n$ to be the number of tables $t \in D_{n,\emptyset,[n]}$ such that $G(t)$ consists of directed cycles and for each $i \in [n]$, column $i$ contains four $i$.
\end{definition}
\begin{lemma}\label{lem:countingcycletables}
For all $n \geq 2$, $C_{n}=(n-1)(C_{n-1}+15C_{n-2})$ where $C_0 = 1$ and $C_1 = 0$.
\end{lemma}
\begin{proof}
Consider a vertex $i \in [n]$. $G(t)$ contains an edge from $i$ to $j$ for some $j \in [n] \setminus \{i\}$. Note that there are $n-1$ possibilities for $j$. We now have two cases. The first case is that there is an edge from $j$ to a vertex $k \in [n] \setminus \{i,j\}$. In this case, we can remove the vertex $j$ and the edges $(i,j),(j,k)$ and add an edge from $i$ to $k$. The number of possibilities for this case (for a fixed $j$) is $C_{n-1}$. The second case is that there is an edge from $j$ back to $i$, i.e., $i$ and $j$ are in a directed cycle of length $2$. The number of possibilities for this case (for a fixed $j$) is $15C_{n-2}$ as there are $15$ possibilities for which rows of column $i$ contain $j$ and there are $C_{n-2}$ possibilities for the remaining $n-2$ columns. 

Adding these two cases together and summing over all possible $j \in [n] \setminus \{i\}$, we have that $C_n = (n-1)(C_{n-1}+15C_{n-2})$, as needed.
\end{proof}
To handle paths, we first count the number of possible graphs $G(t)$ with a given number of paths to $\cter$ and no cycles with the following lemma.
\begin{lemma}\label{lem:countingpathgraphs}
Let $B' \subseteq [n]$ and take $j = |B'|$. For all $x \in [j]$, there are $\frac{\binom{j-1}{x-1}}{x!}j!$ possible graphs $G(t)$ on the vertices $B' \cup \{\cter\}$ which consist of $x$ disjoint paths to $\cter$ and no cycles.
\end{lemma}
\begin{proof}
We can specify each such graph as follows:
\begin{enumerate}
    \item Choose an ordering for the elements of $B'$. There are $j!$ possibilities for this ordering.
    \item Choose the $x$ paths by putting $x-1$ dividing lines among the elements of $B'$. Since each path must have at least one vertex in $B'$, there are $\binom{j-1}{x-1}$ possibilities for this.
\end{enumerate}
However, if we do this, each graph is counted $x!$ times, one for each possible ordering of the $x$ paths. Thus, the number of such graphs is $\frac{\binom{j-1}{x-1}}{x!}j!$, as needed.
\end{proof}
In order to have a path $(b_1,b_2),(b_2,b_3),\ldots,(b_l,\cter)$ in $G(t)$, a pair of elements from the columns corresponding to $\cter$ must be placed into the column containing four $b_1$. The two displaced $b_1$ must then be placed into the same rows of the column containing four $b_2$. Continuing in this way, we are left with two $b_{l}$ which replace the original pair of elements from $\cter$. Thus, to specify the contents of the columns corresponding to the paths in $G(t)$, it is necessary and sufficient to choose a pair from the columns corresponding to $\cter$ for each path. Note that these pairs must be different as otherwise we would not end up with disjoint paths.

We now give the entire analysis for $D_{n,a,b}$. Given $A \subseteq [n]$ and $B \subseteq [n] \setminus A$, we can compute $D_{n,a,b} = \sum_{t \in D_{n,A,B}}{\sgn(t)|\mathcal{P}_{A,B}(t)|}$ as follows. As before, we take $a = |A|$ and $b = |B|$.
\begin{enumerate}
    \item For each $a \in A$, we choose which column contains six $a$. Similarly, for each $b \in B$, we choose which column contains four $b$. The number of choices for this is $\prod_{j=0}^{a+b-1}{(n-j)}$.
    \item After choosing these columns, we choose a table $t \in T_{6,n-a-b}$ and a pairing $P \in \mathcal{P}(t)$ to fill in the remaining columns. This gives a factor of $P_{n-a-b}$.
    \item We split into cases based on the number of vertices $i$ in $G(t)$ which are contained in cycles. For each $i$, we choose which $\binom{b}{i}$ of the elements in $B$ are contained in cycles. By Lemma \ref{lem:countingcycletables}, once these elements are chosen there are $C_i$ possibilities for the columns containing these elements.
    \item There are now $j = b - i$ elements of $B$ which are contained in paths. We further split into cases based on the number $x$ of paths in $G(t)$. By Lemma \ref{lem:countingpathgraphs}, there are $\frac{\binom{j-1}{x-1}}{x!}j!$ possibilities for what these paths are in $G(t)$.
    
    As discussed above, for each of the $x$ paths we need to choose a different pair in $P$. The number of choices for these pairs is $\prod_{y=0}^{x-1}{(3(n-a-b)-y)}$. Summing all of these possibilities up gives a factor of
    \[
        H_{n,j,a,b}=\sum_{x=1}^{j}{\frac{\binom{j-1}{x-1}}{x!}j!\prod_{y=0}^{x-1}(3(n-a-b)-y)}.
    \]
\end{enumerate}
Putting everything together, we have that 
\[
    D_{n,a,b}=\left(\prod_{j=0}^{a+b-1}{(n-j)}\right)\left(\sum_{i=0}^{b}\binom{b}{i}C_{i}H_{n,b-i,a,b}\right)P_{n-a-b},
\]
as needed.
\end{proof}
\end{proof}

We now simplify the terms in Lemma \ref{main_lemma}.
\begin{proposition}\label{prop:H}
\begin{equation*}
    H_{n,j,a,b} = \frac{(3(n-a-b)+j-1)!}{(3(n-a-b)-1)!}.
\end{equation*}
\end{proposition}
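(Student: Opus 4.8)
The plan is to recognize the sum defining $H_{n,j,a,b}$ as a Chu--Vandermonde convolution after one small rewriting. Throughout I would write $N = 3(n-a-b)$, so that the claimed value $\frac{(N+j-1)!}{(N-1)!}$ is just the rising factorial $N(N+1)\cdots(N+j-1)$.

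First I would rewrite the falling-factorial product appearing in each summand. Since $\prod_{y=0}^{x-1}(N-y) = N(N-1)\cdots(N-x+1) = x!\binom{N}{x}$, the factor $x!$ cancels the $\frac{1}{x!}$ in the summand, giving
\[
\frac{\binom{j-1}{x-1}}{x!}\,j!\,\prod_{y=0}^{x-1}(N-y) = j!\binom{j-1}{x-1}\binom{N}{x},
\]
and hence $H_{n,j,a,b} = j!\sum_{x=1}^{j}\binom{j-1}{x-1}\binom{N}{x}$.

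Next I would apply Vandermonde's identity. Using the symmetry $\binom{j-1}{x-1} = \binom{j-1}{j-x}$, and noting that adjoining the $x=0$ term $\binom{j-1}{j}\binom{N}{0}=0$ changes nothing, the sum equals $\sum_{x=0}^{j}\binom{N}{x}\binom{j-1}{j-x} = \binom{N+j-1}{j}$ by the Chu--Vandermonde convolution $\sum_{x}\binom{N}{x}\binom{M}{j-x}=\binom{N+M}{j}$ with $M=j-1$. Therefore $H_{n,j,a,b} = j!\binom{N+j-1}{j} = \frac{(N+j-1)!}{(N-1)!}$, which is exactly the claim.

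The computation is entirely routine once the rewriting is spotted, so I do not expect a genuine obstacle; the only substantive step is casting the sum into Vandermonde form. The one point worth a brief remark is that every identity above is a polynomial identity in $N$, so it remains valid in the degenerate case $N=0$ (both sides vanishing for $j\geq 1$), provided one reads $\frac{(N+j-1)!}{(N-1)!}$ as the rising factorial $N^{\overline{\jmath}}$ rather than a literal ratio of factorials.
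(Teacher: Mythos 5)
Your proof is correct and follows essentially the same route as the paper's: rewrite the falling-factorial product as $x!\binom{N}{x}$ so the summand becomes $j!\binom{j-1}{x-1}\binom{N}{x}$, then evaluate the resulting sum by the Chu--Vandermonde identity to get $j!\binom{N+j-1}{j}$. The only differences are presentational --- you spell out the symmetry step and the vanishing $x=0$ term, and add a remark on the degenerate case $N=0$, which the paper leaves implicit.
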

\begin{proof}
Originally,
\[
H_{n,j,a,b} = \sum_{x=1}^{j}{\frac{\binom{j-1}{x-1}}{x!}j!\prod_{y=0}^{x-1}(3(n-a-b)-y)}.
\]
Denote $z = 3(n-a-b)$. For the inner product, we can write
\[
\prod_{y=0}^{x-1}(3(n-a-b)-y) = \frac{z!}{(z-x)!},
\]
so
\[
H_{n,j,a,b} = \sum_{x=1}^{j}\frac{\binom{j-1}{x-1}}{x!}\frac{j!z!}{(z-x)!} = j! \sum_{x=1}^{j}\binom{j-1}{x-1} \binom{z}{x} = j! \binom{z+j-1}{j}.
\]
The last equality is a special case of the \textbf{Chu-Vandermonde Identity}.
\end{proof}
\begin{lemma}\label{LemFlajo}
Let $S_n$ be the set of all permutations of order $n$ and $D_n$ the set of all \textbf{derangements} of the same order. That means, $D_n$ is a subset of those permutations in $S_n$ which have no fixed points. Denote $C(\pi)$ the number of cycles in a permutation $\pi$, then
\begin{equation*}
\sum_{n=0}^\infty \frac{x^n}{n!}\sum_{\pi \in D_n} u^{C(\pi)} = \frac{e^{-ux}}{(1-x)^u}.
\end{equation*}
\end{lemma}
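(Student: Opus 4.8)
The plan is to prove this via the exponential formula (the symbolic method for labeled combinatorial structures), since the right-hand side has exactly the shape one expects from exponentiating a cycle-counting generating function. The key structural observation is that every permutation is precisely an unordered collection (a set) of disjoint cycles on a labeled ground set, and a derangement is the same object with the additional constraint that no cycle has length $1$.

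First I would compute the exponential generating function for a single cycle on labeled points, marked by $u$ to track that one cycle. There are $(k-1)!$ distinct cyclic orderings of $k$ labeled elements, so the EGF for a single cycle of length $\geq 1$ is $u\sum_{k\geq 1}(k-1)!\,\frac{x^k}{k!} = u\sum_{k\geq 1}\frac{x^k}{k} = -u\log(1-x)$. For derangements we discard $1$-cycles, so the EGF for a single admissible cycle becomes $u\sum_{k\geq 2}\frac{x^k}{k} = -u\log(1-x) - ux$.

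Next I would apply the exponential formula: if an object is a set of connected components each governed by EGF $g(x)$, then the EGF of the whole object is $\exp(g(x))$. Applying this with $g(x) = -u\log(1-x)-ux$, the EGF for derangements marked by their number of cycles is $\exp\!\big(-u\log(1-x)-ux\big) = \frac{e^{-ux}}{(1-x)^u}$, which is exactly the claimed identity. As a sanity check, dropping the $1$-cycle restriction recovers $\exp(-u\log(1-x)) = (1-x)^{-u}$, the classical EGF for all of $S_n$ by number of cycles.

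The only real obstacle is justifying the exponential formula at the level of formal power series; this is entirely standard, but if one prefers to avoid invoking it as a black box, I would instead derive the result from the permutation identity $\sum_n \frac{x^n}{n!}\sum_{\pi\in S_n}u^{C(\pi)} = (1-x)^{-u}$ by factoring off fixed points. Every permutation splits uniquely into its set of fixed points (each a $1$-cycle, contributing a factor $u$) together with a derangement on the complementary elements; this bijection gives the EGF product $(1-x)^{-u} = e^{ux}\cdot\left(\sum_n \frac{x^n}{n!}\sum_{\pi\in D_n}u^{C(\pi)}\right)$, and solving for the derangement series yields $\frac{e^{-ux}}{(1-x)^u}$ as desired.
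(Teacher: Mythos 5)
Your proof is correct. Note that the paper does not actually prove this lemma; its ``proof'' is a one-line citation to the chapter on bivariate generating functions in Flajolet and Sedgewick. Your primary argument---the exponential formula applied to sets of $u$-marked cycles of length at least $2$, with single-cycle EGF $-u\log(1-x)-ux$, exponentiating to $e^{-ux}(1-x)^{-u}$---is essentially the derivation contained in that reference, so you have in effect inlined the cited proof and made the result self-contained. Your fallback argument is also sound and is arguably the more elementary route: the bijection sending $\pi \in S_n$ to the pair (set of fixed points, derangement on the complementary elements) preserves total cycle count, each fixed point contributing one cycle and hence a factor $u$, so the product rule for labeled structures gives $(1-x)^{-u} = e^{ux}\cdot\sum_{n}\frac{x^n}{n!}\sum_{\pi\in D_n}u^{C(\pi)}$, and dividing by $e^{ux}$ finishes; the only external input is the classical Stirling-cycle identity $\sum_{n}\frac{x^n}{n!}\sum_{\pi\in S_n}u^{C(\pi)}=(1-x)^{-u}$, which itself follows from the same exponential formula. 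Both routes are valid as formal power series identities, which is all the paper needs.
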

\begin{proof}
For a derivation, see the chapter on Bivariate generating functions in \cite{flajolet2009analytic}.
\end{proof}
\begin{corollary}\label{cor:Cngen}
\[
\sum_{n = 0}^\infty \frac{t^n}{n!} C_n = \frac{e^{-15x}}{(1-x)^{15}}.
\]
\end{corollary}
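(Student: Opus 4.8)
The plan is to recognize the exponential generating function of $C_n$ as the specialization $u = 15$ of the bivariate generating function supplied by Lemma \ref{LemFlajo}. The bridge is the combinatorial identity
\[
C_n = \sum_{\pi \in D_n} 15^{C(\pi)},
\]
after which substituting $u = 15$ into Lemma \ref{LemFlajo} yields the claim at once. So the whole task reduces to establishing this identity, i.e.\ to recognizing that the quantity $C_n$ defined by the recurrence in Lemma \ref{lem:countingcycletables} is precisely the number of derangements of $[n]$ weighted by $15$ to the number of cycles.

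To prove the identity I would set $a_n = \sum_{\pi \in D_n} 15^{C(\pi)}$ and show that $a_n$ satisfies the same recurrence and initial data as $C_n$, namely $a_n = (n-1)(a_{n-1} + 15 a_{n-2})$ with $a_0 = 1$ and $a_1 = 0$. The initial conditions are immediate: the empty permutation is a derangement with no cycles, so $a_0 = 15^0 = 1$, while the unique permutation of a single element has a fixed point, so $a_1 = 0$; these match $C_0 = 1$ and $C_1 = 0$.

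For the recurrence, I would condition on the cycle of $\pi \in D_n$ containing the element $n$, which necessarily has length at least $2$. If it is the transposition swapping $n$ and some $j$, there are $n-1$ choices for $j$, this cycle contributes a factor of $15$, and deleting both elements leaves an arbitrary derangement of the remaining $n-2$ elements, for a total contribution of $(n-1)\cdot 15 \cdot a_{n-2}$. If the cycle has length at least $3$, I delete $n$ by splicing its predecessor directly onto its successor; this produces a derangement of $[n-1]$ with the same number of cycles, and conversely each of the $n-1$ insertion positions in a derangement of $[n-1]$ recovers such a $\pi$ without changing the cycle count, for a contribution of $(n-1)\, a_{n-1}$. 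Summing the two cases gives $a_n = (n-1)(a_{n-1} + 15 a_{n-2})$, so $a_n = C_n$ for all $n$ by induction.

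The step requiring the most care is the length-$\geq 3$ case: I must verify that deleting $n$ never creates a fixed point (the spliced cycle still has length $\geq 2$, and the two elements whose images change are not sent to themselves), and that the insertion map is well-defined, bijective, and cycle-count preserving. Everything else is bookkeeping. Once the identity $C_n = \sum_{\pi \in D_n} 15^{C(\pi)}$ is in hand, Corollary \ref{cor:Cngen} follows directly by setting $u = 15$ in Lemma \ref{LemFlajo}.
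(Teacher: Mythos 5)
Your proposal is correct and follows essentially the same route as the paper: both reduce the corollary to the identity $C_n = \sum_{\pi \in D_n} 15^{C(\pi)}$ and then specialize Lemma \ref{LemFlajo} to $u=15$. The only difference is that the paper asserts this identity in a single line (implicitly reading it off the combinatorial definition of $C_n$, where each cycle of the graph $G(t)$ admits $\binom{6}{2}=15$ row placements), whereas you verify it by showing both sides satisfy the recurrence of Lemma \ref{lem:countingcycletables} with the same initial conditions --- a correct, self-contained justification of the step the paper leaves implicit.
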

\begin{proof}
An alternative way how to write $C_n$ is via $C_n = \sum_{\pi \in D_n} {15}^{C(\pi)}$.
\end{proof}
With these simplifications, we can derive an expression for the generating function
\[
F_6(t) = \sum_{n=0}^\infty \frac{t^n}{(n!)^2} f_6(n).
\]
By Lemma \ref{main_lemma},
\[
F_6(t) = \sum_{0 \leq a \leq j \leq n} \frac{t^n}{(n!)^2} \binom{n}{j}\binom{j}{a}(m_6 - 15)^{a} (m_4 - 3)^{(j-a)}D_{n,a,j-a}.
\]
Summing with respect to $b=j-a$ instead of $a$ and observing that \begin{align*}
D_{n,a,b}&=\left(\prod_{k=0}^{a+b-1}{(n-k)}\right)\left(\sum_{i=0}^{b}\binom{b}{i}C_{i}H_{n,b-i,a,b}\right)P_{n-a-b}\\ 
&= \frac{n!}{(n-j)!}\left(\sum_{i=0}^{b}\binom{b}{i}C_{i}H_{n,b-i,j-b,b}\right)P_{n-j}\\
&=n!\left(\sum_{i=0}^{b}\binom{b}{i}C_{i}H_{n,b-i,j-b,b}\right)\frac{(n\!-\!j\!+\!2)!(n\!-\!j\!+\!4)!}{48},
\end{align*}
we have that
\[
F_6(t) = \!\!\!\!\! \sum_{0 \leq i \leq b \leq j \leq n} \frac{t^n}{n!} \binom{n}{j}\binom{j}{b}\binom{b}{i}(m_6 - 15)^{(j-b)} (m_4 - 3)^{b} \frac{(n\!-\!j\!+\!2)!(n\!-\!j\!+\!4)!}{48}H_{n,b-i,j-b,b} C_i.
\]
By Proposition \ref{prop:H}, $H_{n,b-i,j-b,b} = (3 n-3 j+ b-i-1)!/(3 n-3j-1)!$. Using the reparametrization $b = i + s, j = b + r, n = j + q$, where $s,r,q$ goes from $0$ to $\infty$, we get
\[
F_6(t) = \sum_{q=0}^\infty \sum_{r=0}^\infty \sum_{s=0}^\infty \sum_{i=0}^\infty \frac{t^{i+s+r+q}}{q!r!s!i!} (m_6 - 15)^r (m_4 - 3)^{(i+s)} \frac{(q+2)!(q+4)!}{48} \frac{(3q+s-1)!}{(3q-1)!} C_i.
\]
Grouping the terms to separate the dependence on $r$, $s$, and $i$, we have that $F_6(t)$ equals
\begin{align*}
 \sum_{q=0}^\infty \frac{t^q}{q!} \frac{(q\!+\!2)!(q\!+\!4)!}{48} \left(\sum_{r=0}^\infty \frac{t^r (m_6\!-\!15)^r}{r!}  \right)\! \left( \sum_{s=0}^\infty \frac{t^s}{s!} \frac{(3q\!+\!s\!-\!1)!}{(3q-1)!} (m_4 \!-\! 3)^s \right)\! \left(\sum_{i=0}^\infty \frac{t^i}{i!} (m_4 \!-\! 3)^i  C_i \right).
\end{align*}
Summing all the inner sums (the rightmost using Corollary \ref{cor:Cngen}),
\[
F_6(t) = \sum_{q=0}^\infty \frac{t^q}{q!} \frac{(q+2)!(q+4)!}{48} e^{t(m_6-15)} \frac{1}{(1-t(m_4-3))^{3q}} \frac{e^{-15t(m_4-3)}}{(1-t(m_4-3))^{15}}.
\]

\section{Generalization for arbitrary third moment}
Restating Proposition \ref{prop:tables}, we can write
\begin{equation*}
    f_6(n) = \sum_{t \in T_{6,n}} w(t)\operatorname{sign}(t),
\end{equation*}
where $T_{6,n}$ is the set of all permutation tables of length $n$ with six rows (six-tables) whose columns fall in one of the following categories
\begin{itemize}
    \item 6-columns: six copies of a single number (weight $m_6$)
    \item 4-columns: four copies of one number and two copies of a distinct number (weight $m_4$)
    \item 2-columns: three pairs of distinct numbers (weight $1$)
\end{itemize}
The weight $w(t)$ of the table $t$ is then simply a product of weights of its columns. To avoid ambiguity, we write $f^*_6(n)$ for $f_6(n)$ with $m_3$ being generally nonzero. We then define $T^*_{6,n}$ as the set of all six-tables having the following extra columns
\begin{itemize}
    \item 3-columns: three copies of one number and three copies of a distinct number (weight $m_3^2$)
\end{itemize}
Similarly, it must hold that
\begin{equation*}
    f^*_6(n) = \sum_{t \in T^*_{6,n}} w(t)\operatorname{sgn}(t).
\end{equation*}
\begin{proposition}
\begin{equation*}
f^*_6(n) = \sum_{j = 0}^n \binom{n}{j}^2 f_6(n-j) \, j! m_3^{2j} (-1)^j \sum_{\pi \in D_j} (-10)^{C(\pi)}.
\end{equation*}
\end{proposition}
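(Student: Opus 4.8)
The plan is to exploit the fact that, under the hypotheses $m_1=0$ and $m_2=1$, the only columns with nonzero weight correspond to partitions of $6$ with no part equal to $1$: namely $6$ (a $6$-column), $4+2$ (a $4$-column), $2+2+2$ (a $2$-column), and the new $3+3$ (a $3$-column); any other column contains some number exactly once or exactly five times and is killed by $m_1=0$. Thus $f^*_6(n)$ differs from $f_6(n)$ only through the $3$-columns, and the main structural observation is that these decouple cleanly from the rest of the table. First I would show that if a number appears in a $3$-column, then its remaining three occurrences must also form a single $3$-column (the splittings $3=2+1$ and $3=1+1+1$ are forbidden by $m_1=0$), so every number occurring in a $3$-column lies in \emph{exactly two} of them. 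Viewing the $3$-columns as edges on the set of numbers they use, every such number has degree $2$, so the $3$-columns form a disjoint union of cycles; in particular, if a table has $j$ three-columns then they involve exactly $j$ numbers, and those $j$ numbers appear in no other column.

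Consequently each table $t\in T^*_{6,n}$ splits into a \emph{special} $6\times j$ block, consisting of the $j$ three-columns together with the $j$ numbers and $j$ positions they occupy, and an \emph{ordinary} block on the remaining $n-j$ columns and $n-j$ numbers, whose columns are only $6$-, $4$-, and $2$-columns. I would next verify that sign and weight factorize over this splitting. The weight is immediate: $w(t)=m_3^{2j}\,w(t^{\mathrm{nsp}})$. For the sign, note that in each of the six rows the $j$ special positions are occupied exactly by the $j$ special numbers, so every row permutation respects the same block partition $P\to V$; writing $\sgn(\pi_i)=\epsilon(P,V)\,\sgn(\pi_i^{\mathrm{sp}})\,\sgn(\pi_i^{\mathrm{nsp}})$, where $\epsilon(P,V)$ is the block-transposition sign (independent of $i$), the factor $\epsilon(P,V)^6=1$ drops out and $\sgn(t)=\sgn(t^{\mathrm{sp}})\,\sgn(t^{\mathrm{nsp}})$. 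Summing the ordinary block over all its fillings gives exactly $f_6(n-j)$, and there are $\binom{n}{j}^2$ ways to choose the set of special positions and the set of special numbers, each contributing equally by symmetry. Writing $S_j:=\sum_{t}\sgn(t)$ for the signed count of pure $3$-column $6\times j$ tables, this yields
\[
f^*_6(n)=\sum_{j=0}^n \binom{n}{j}^2 f_6(n-j)\,m_3^{2j}\,S_j,
\]
so the proposition reduces to the identity $S_j=j!\,(-1)^j\sum_{\pi\in D_j}(-10)^{C(\pi)}$.

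To evaluate $S_j$ I would classify the pure $3$-column tables by the cycle decomposition of their $3$-column graph, exactly as in the treatment of $C_n$ in Lemma~\ref{lem:countingcycletables} and Corollary~\ref{cor:Cngen}. Fixing a number and following its cycle produces a recurrence of the same shape as $C_{n}=(n-1)(C_{n-1}+15C_{n-2})$, but with the $4$-column building block replaced by the $3$-column one. The crucial computation is the signed contribution of the shortest cycle, a double $3$-column on two numbers: its $\binom{6}{3}=20$ fillings each carry sign $(-1)^3=-1$, giving $-20=2!\cdot(-10)$, which exhibits both the per-cycle weight $-10$ (in place of the $15=\binom{6}{2}$ of the $4$-column case) and the factor $j!$ coming from distributing the labeled columns among the edges of the cycles. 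Carrying this through all cycle lengths, the normalized count $S_j/j!$ satisfies the same recurrence as the derangements weighted by $-10$ per cycle, and the identification $S_j/j! = (-1)^j\sum_{\pi\in D_j}(-10)^{C(\pi)}$ follows from Lemma~\ref{LemFlajo} with $u=-10$. The main obstacle is precisely this last step: tracking the signs in the cyclic $3$-column transport so that the building-block weight comes out as exactly $-10$ and the global sign as $(-1)^j$. As sanity checks, the claimed formula gives $S_0=1$, $S_1=0$ (a single $3$-column is impossible with one number, matching $D_1=\emptyset$), and $S_2=-20$, in agreement with the direct count above.
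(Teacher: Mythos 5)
Your proposal is correct, and its first half is essentially the paper's own argument: the paper likewise isolates the $3$-columns into a subtable, notes that weight and sign factor over the split (your block-transposition argument with $\epsilon(P,V)^6=1$ is actually more careful than the paper's one-line assertion $\sgn(t)=\sgn(s)\sgn(t')$), and uses the $\binom{n}{j}^2$ symmetry to reduce everything to the signed sum $S_j$ over pure $3$-column tables. Where you genuinely diverge is in evaluating $S_j$. The paper does this with one bijective observation instead of a recurrence: fix the first row to be the identity (this yields the factor $j!$, since reordering columns multiplies all six row signs by the same factor), read off a derangement $\pi$ by letting $\pi(b)$ be the second number in column $b$, and then note that each of the remaining five rows, restricted to the columns of a cycle of $\pi$ of length $l$, must be either the identity or the full cyclic shift on that cycle; the column counts force exactly three shifts, giving $\binom{5}{2}=10$ fillings per cycle, every one of which has sign $(-1)^{l-1}$. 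That single observation disposes of exactly what you call ``the main obstacle'': it proves at once that the per-cycle weight is $10$ and that the sign is $\sgn(\pi)$ uniformly over fillings, so $S_j=j!\sum_{\pi\in D_j}\sgn(\pi)\,10^{C(\pi)}=j!\,(-1)^j\sum_{\pi\in D_j}(-10)^{C(\pi)}$ with no recurrence and no appeal to Lemma \ref{LemFlajo}. Your route via undirected cycle structures does go through --- the same identity-or-shift argument shows a cycle of length $l$ has signed filling count $\binom{6}{3}(-1)^{l-1}=20(-1)^{l-1}$, and a recurrence or the exponential formula then recovers the derangement sum --- and it has the virtue of running in parallel with the treatment of $C_n$ in Lemma \ref{lem:countingcycletables} and Corollary \ref{cor:Cngen}. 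But as written it has two soft spots: the general-$l$ sign/weight computation is only verified at $l=2$, and the factor $j!$ is not purely ``distributing the labeled columns among the edges'' (for a doubled edge the two parallel edges give only one distinct assignment, and the missing $2$ comes from the fillings, $20=2\cdot 10$; these factors of $2$ must be tracked per cycle, together with the directed-versus-undirected cycle count, for $S_j/j!$ to match the derangement sum). Your sanity checks $S_0=1$, $S_1=0$, $S_2=-20$ are all correct.
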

\begin{proof}
The key is to group the summands according to the 3-columns in $t$. Those columns form a subtable $s$ and the rest of the columns form another, complementary subtable $t'$. The signs of those tables are related as
\begin{equation*}
 \operatorname{sgn}(t) = \operatorname{sgn}(s) \operatorname{sgn}(t').
\end{equation*}
Denote $\left[n\right] = \{1,2,3,\ldots,n\}$. For a given $J \subset \left[n\right]$, we define $T_{6,J}$ a set of all six-tables of length $j = |J|$ composed with numbers in $J$. The set $T_{6,n}$ coincides with $T_{6,\left[n\right]}$. Denote $Q_{6,J}$ as the set of all six-tables composed only from 3-columns of numbers in $J$. We can write our sum, since the selection $J$ does not depend on position in table $t$, as
\begin{equation*}
    f^*_6(n) = \sum_{J \subset \left[n\right]} \binom{n}{j} \sum_{t' \in T_{6,\left[n\right]/J}} w(t)\operatorname{sgn}(t) \sum_{s \in Q_{6,J}} w(s)\operatorname{sgn}(s).
\end{equation*}
No matter which numbers $J$ are selected, as long as we select the same amount of them, the contribution is the same. Hence,
\begin{equation*}
    f^*_6(n) = \sum_{j = 0}^n \binom{n}{j}^2 \sum_{t' \in T_{6,n-j}} w(t)\operatorname{sgn}(t) \sum_{s \in Q_{6,j}} w(s)\operatorname{sgn}(s),
\end{equation*}
where $Q_{6,j} = Q_{6,\left[j\right]}$. The first inner sum is simply $f_6(n-j)$. For the second inner sum, by symmetry, we can fix the first permutation in $s$ to be identity. Upon noticing also that $w(s) = m_3^{2j}$, we get
\begin{equation*}
\sum_{s \in Q_{6,j}} w(s)\operatorname{sgn}(s) = j! m_3^{2j} \sum_{\substack{s \in Q_{6,j} \\ s_1 = \mathrm{id}}} \operatorname{sgn}(s).
\end{equation*}
We group the summands according to the following permutation structure: Let $b$ be a number in the first row of a given column of table $s$. Since it is a 3-column, we denote the other number in the column as $b'$. We construct a permutation $\pi(s)$ to a given table $s$ as composed from all those pairs $b \rightarrow b'$. Then
\begin{equation*}
\operatorname{sgn}(s) = \operatorname{sign}(\pi(s)) = (-1)^{j-C(\pi(s))}.
\end{equation*}
Note that since $b$ and $b'$ are always different, the set off all $\pi(s)$ corresponds to the set $D_j$ of all derangements. Since there are $10$ possibilities how to arrange the leftover 5 numbers in the 3-columns corresponding to a given cycle of $\pi(s)$, we get
\begin{equation*}
\sum_{s \in Q_{6,j}} w(s)\operatorname{sgn}(s) = j! m_3^{2j} (-1)^j \sum_{\pi \in D_j} (-1)^{C(\pi)} {10}^{C(\pi)}.
\end{equation*}
and thus, all together
\begin{equation*}
    f^*_6(n) = \sum_{j = 0}^n \binom{n}{j}^2 f_6(n-j) \, j! m_3^{2j} (-1)^j \sum_{\pi \in D_j} (-10)^{C(\pi)}.
\end{equation*}
\end{proof}
\begin{corollary}
\begin{equation*}
F_6^*(t) = (1+m_3^2t)^{10} e^{-10m_3^2t} \, F_6(t).
\end{equation*}
\end{corollary}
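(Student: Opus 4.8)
The plan is to substitute the formula for $f_6^*(n)$ established in the preceding Proposition directly into the definition $F_6^*(t) = \sum_{n=0}^\infty \frac{t^n}{(n!)^2} f_6^*(n)$ and recognize the resulting double sum as a Cauchy product of two one-variable power series. Concretely, I would write
\[
F_6^*(t) = \sum_{n=0}^\infty \frac{t^n}{(n!)^2} \sum_{j=0}^n \binom{n}{j}^2 f_6(n-j)\, j!\, m_3^{2j} (-1)^j \sum_{\pi \in D_j} (-10)^{C(\pi)}.
\]

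The first step is the factorial bookkeeping. Expanding $\binom{n}{j}^2 = \frac{(n!)^2}{(j!)^2((n-j)!)^2}$ cancels the $\frac{1}{(n!)^2}$ weight exactly, and absorbing the single factor $j!$ from the Proposition leaves $\frac{1}{j!\,((n-j)!)^2}$. Reindexing with $k = n-j$ and summing over all $j,k \ge 0$ then decouples the double sum into a product:
\[
F_6^*(t) = \left(\sum_{k=0}^\infty \frac{t^k}{(k!)^2} f_6(k)\right)\!\left(\sum_{j=0}^\infty \frac{t^j}{j!}\, m_3^{2j}(-1)^j \sum_{\pi \in D_j}(-10)^{C(\pi)}\right).
\]
The first factor is precisely $F_6(t)$ by definition, so it remains only to evaluate the second factor in closed form.

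For the second factor I would invoke Lemma \ref{LemFlajo} with the substitution $u = -10$ and $x = -m_3^2 t$. With this choice, $u^{C(\pi)} = (-10)^{C(\pi)}$ and $\frac{x^j}{j!} = \frac{(-m_3^2 t)^j}{j!} = \frac{t^j}{j!} m_3^{2j}(-1)^j$, so the second factor matches the left-hand side of Lemma \ref{LemFlajo} term by term and equals $\frac{e^{-ux}}{(1-x)^u}$. Substituting the values gives $-ux = -10 m_3^2 t$ and $(1-x)^u = (1+m_3^2 t)^{-10}$, so this factor is $(1+m_3^2 t)^{10} e^{-10 m_3^2 t}$, and multiplying by $F_6(t)$ yields the claimed identity. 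I do not expect a genuine obstacle here: the argument is a clean generating-function manipulation, and the only point requiring care is tracking the signs correctly so that the pair $(u,x) = (-10,\,-m_3^2 t)$ is read off from the summand without error when applying Lemma \ref{LemFlajo}.
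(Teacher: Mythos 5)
Your proposal is correct and follows essentially the same route as the paper: both substitute the Proposition's formula for $f_6^*(n)$ into $\sum_{n}\frac{t^n}{(n!)^2}f_6^*(n)$, cancel the factorials to reveal the Cauchy product with $F_6(t)$, and evaluate the remaining derangement sum via Lemma \ref{LemFlajo} with $u=-10$, $x=-m_3^2t$. The sign bookkeeping you flag as the only delicate point is handled identically (and correctly) in the paper.
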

\begin{proof}
In terms of generating functions,
\begin{equation*}
\begin{split}
    F_6^*(t) & = \sum_{n=0}^\infty \frac{t^n}{n!^2} f^*_6(n) = \sum_{n=0}^\infty\sum_{j = 0}^n \frac{t^{n-j}}{(n-j)!^2} f_6(n-j) \frac{(-m_3^2t)^j}{j!} \sum_{\pi \in D_j} (-10)^{C(\pi)} \\
    & = F_6(t) \sum_{j=0}^\infty\frac{(-m_3^2t)^j}{j!} \sum_{\pi \in D_j} (-10)^{C(\pi)} = F_6(t) \frac{e^{-10m_3^2t}}{(1+m_3^2t)^{-10}}.
\end{split}
\end{equation*}
The final equality is a special case of Lemma \ref{LemFlajo}. Theorem \ref{m3nonzero_thm} follows.
\end{proof}

\section{Asymptotics}
The proof relies directly on the calculus developed by Borinsky \cite{borinsky2018}, enabling us to extract the asymptotic behaviour of coefficients from their factorially divergent generating function. We use the following result from Borinsky \cite{borinsky2018}:
\begin{definition}
We say a formal power series $f(t) = \sum_{n \geq 0} f_n t^n$ is factorially divergent of type $(\alpha,\beta)$, if $f_n \sim \sum_{k=0}^R c_k \alpha^{n+\beta - k} \Gamma(n+\beta-k) $ as $n \to \infty$ for any fixed $R$ integer. We also define an operator $\mathcal{A}^\alpha_\beta$ acting of $f(t)$ such that $(\mathcal{A}^\alpha_\beta f)(t) = \sum_{k\geq 0} c_k t^k$. If moreover $f(t)$ is analytic at $0$, then $(\mathcal{A}^\alpha_\beta f)(t) = 0$.
\end{definition}
\begin{lemma}\label{lem:borinsky}
Let $f(t)$ and $g(t)$ be two factorially divergent power series of type $(\alpha,\beta)$, then
\begin{align*}
(\mathcal{A}^\alpha_\beta (fg))(t) & =  (\mathcal{A}^\alpha_\beta f)(t) g(t) + f(t) (\mathcal{A}^\alpha_\beta g)(t),\\
(\mathcal{A}^\alpha_\beta (f \circ g) )(t) & = f'(g(t)) (\mathcal{A}^\alpha_\beta g)(t) + \left(\frac{t}{g(t)}\right)^\beta e^{\frac{\frac{1}{t}-\frac{1}{g(t)}}{\alpha}} (\mathcal{A}^\alpha_\beta f )(g(t)),
\end{align*}
where the second equality holds when $g(t) = 1 + t + O(t^2)$.
\end{lemma}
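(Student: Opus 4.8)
The plan is to prove both identities as transfer rules for the asymptotic-extraction operator $\mathcal{A}^\alpha_\beta$, working directly at the level of coefficient asymptotics. The single mechanism behind everything is that a convolution of two factorially divergent sequences is dominated by the two ends of its summation range: since $\Gamma$ is log-convex, a product $\Gamma(j+c)\Gamma(n-j+c')$ is largest at $j=0$ and $j=n$, so the bulk of the convolution is of strictly lower order. The elementary identity
\[
\alpha^{n-j+\beta-k}\Gamma(n-j+\beta-k)=\alpha^{n+\beta-(j+k)}\Gamma\bigl(n+\beta-(j+k)\bigr)
\]
identifies a $\Gamma$-factor whose argument is shifted down by $j$ with the level-$(j+k)$ term of the hierarchy $\{\alpha^{n+\beta-m}\Gamma(n+\beta-m)\}_{m\ge 0}$ that defines $\mathcal{A}^\alpha_\beta$. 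In all cases I would fix the target order $m$ and show that the coefficient of $\alpha^{n+\beta-m}\Gamma(n+\beta-m)$ in the sequence under study equals $[t^m]$ of the claimed right-hand side; because that coefficient depends on only finitely many ordinary and asymptotic coefficients of $f$ and $g$, every sum involved is finite and the divergence of $f$ and $g$ causes no convergence difficulty.

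For the product rule, write $(fg)_n=\sum_{j=0}^n f_j\,g_{n-j}$ and split the range into a left region ($n-j$ large), a right region ($j$ large), and a negligible bulk. In the left region I substitute $g_{n-j}\sim\sum_k b_k\,\alpha^{n-j+\beta-k}\Gamma(n-j+\beta-k)$ with $b_k=[\mathcal{A}^\alpha_\beta g]_k$, keeping the $f_j$ as ordinary coefficients; the identity above turns the surviving contribution into $\sum_m[t^m]\bigl(f\cdot\mathcal{A}^\alpha_\beta g\bigr)\,\alpha^{n+\beta-m}\Gamma(n+\beta-m)$. By symmetry the right region contributes $(\mathcal{A}^\alpha_\beta f)\cdot g$, while log-convexity makes the bulk lower order at each fixed $m$. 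Adding the two boundary contributions yields the Leibniz rule, so $\mathcal{A}^\alpha_\beta$ is a derivation.

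For the chain rule I would use that both sides are linear in $f$ for fixed $g$ and treat the two terms separately. The first term is the derivation contribution: when $\mathcal{A}^\alpha_\beta f=0$ (convergent $f$), iterating the Leibniz rule on $f\circ g=\sum_m f_m g^m$ gives $\mathcal{A}^\alpha_\beta(g^m)=m\,g^{m-1}\mathcal{A}^\alpha_\beta g$ and hence $\mathcal{A}^\alpha_\beta(f\circ g)=f'(g)\,\mathcal{A}^\alpha_\beta g$, extended from polynomials to convergent $f$ by continuity of $\mathcal{A}^\alpha_\beta$. The second term isolates the effect of the divergence of $f$ itself, and already appears in the representative case where $g$ is normalized by $g(0)=0$, $g'(0)=1$ (which is what makes $f\circ g$, $(t/g)^\beta$, and $e^{(1/t-1/g)/\alpha}$ genuine power series). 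Taking a reference series with $\Phi_n=\alpha^{n+\beta}\Gamma(n+\beta)$ and reindexing $m=n-r$, one gets $[t^n](\Phi\circ g)=\sum_{r\ge0}\Phi_{n-r}\,[t^r](g/t)^{n-r}$, a sum governed by small $r$.

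The main obstacle is exactly this last resummation. The factor $\Gamma(n-r+\beta)/\Gamma(n+\beta)\sim n^{-r}$ precisely cancels the $n^{r}$ growth of $[t^r](g/t)^{n-r}$, so each $r$ contributes at order $O(1)$ relative to $\Phi_n$ and the series over $r$ exponentiates. A discrete Laplace/saddle-point analysis, carried out to all orders $R$ with error uniform in $n$, shows these contributions reassemble into $[t^m]\bigl((t/g)^\beta e^{(1/t-1/g)/\alpha}(\mathcal{A}^\alpha_\beta f)(g)\bigr)$, with $e^{(1/t-1/g)/\alpha}$ coming from the leading exponentiation and $(t/g)^\beta$ from the subleading $\Gamma$-ratio corrections. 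As a sanity check the constant terms agree: with $g_2=[t^2]g$ one has $e^{(1/t-1/g)/\alpha}\big|_{t=0}=e^{g_2/\alpha}=\lim_{n\to\infty}[t^n](\Phi\circ g)/\Phi_n$. Making this exponentiation rigorous is the technical heart; combining it with the derivation term through linearity in $f$ gives the stated chain rule, which is the route of Borinsky \cite{borinsky2018}.
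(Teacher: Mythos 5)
First, a point of comparison: the paper does not prove this lemma at all --- it is imported verbatim from Borinsky \cite{borinsky2018}, just as Lemma \ref{LemFlajo} is imported from Flajolet--Sedgewick --- so your sketch has to be judged as a standalone proof of Borinsky's theorem rather than against an in-paper argument. Judged that way, your product-rule part is essentially sound and completable: endpoint dominance of the convolution $\sum_j f_j g_{n-j}$, with log-convexity of $\Gamma$ controlling the bulk, is the standard argument. (Two small repairs: you need the definition with uniform remainders $O(\alpha^{n+\beta-R}\Gamma(n+\beta-R))$, not just the termwise $\sim$ stated in the paper, in order to bound the bulk terms at all; and since the bulk has about $n$ terms, bounding it at a given order costs one power of $n$, fixed by splitting at $R+1$.) You also correctly supply the hypothesis $g(t)=t+O(t^2)$ in place of the paper's misprint $g(t)=1+t+O(t^2)$, without which $f\circ g$, $(t/g)^\beta$, and $e^{(1/t-1/g)/\alpha}$ are not even formal power series.

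The chain rule, however, has genuine gaps, and they sit exactly where the content of the lemma lies. (i) The extension of $\mathcal{A}^\alpha_\beta(f\circ g)=f'(g)\,\mathcal{A}^\alpha_\beta g$ from polynomials to convergent $f$ ``by continuity of $\mathcal{A}^\alpha_\beta$'' is not available: $\mathcal{A}^\alpha_\beta$ is discontinuous in the coefficientwise topology, since the truncations of any $f$ converge to $f$ while $\mathcal{A}^\alpha_\beta$ annihilates every polynomial. Passing $\mathcal{A}^\alpha_\beta$ through $\sum_m f_m g^m$ requires bounds on $[t^n]g^m$ uniform in $m\le n$, and obtaining those is the actual work. (ii) Your linearity split does not exhaust a general factorially divergent $f$: subtracting the finitely many shifted reference series $t^k\Phi$ (which have $\mathcal{A}^\alpha_\beta(t^k\Phi)=t^k$) leaves a remainder that is still divergent, merely of lower order, so it falls under neither your convergent case nor your reference case; handling it needs the same missing uniform estimate. (iii) Most importantly, the step you explicitly defer --- showing that $\sum_{r\ge0}\Phi_{n-r}[t^r](g/t)^{n-r}$ reassembles, to all orders and with errors uniform in $n$, into $(t/g)^\beta e^{(1/t-1/g)/\alpha}(\mathcal{A}^\alpha_\beta f)(g)$ --- is not a routine saddle-point exercise but is precisely the theorem: since every $r$ contributes at relative order $O(1)$ (as your own $e^{g_2/\alpha}$ check shows), each coefficient of the asymptotic expansion requires an infinite resummation over $r$ of jointly expanded $\Gamma$-ratios and $[t^r](g/t)^{n-r}$, with all cross terms controlled. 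Your heuristics are correct and the route is indeed Borinsky's, but with the decisive resummation asserted rather than proven, the proposal is a roadmap, not a proof.
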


Recall Theorem \ref{m3nonzero_thm}, which states
\begin{align*}
F_6(t) =\left(1+m_3^2t\right)\!{}^{10}\, \frac{e^{t \left(m_6-10 m_3^2-15m_4 + 30\right)}}{48 \left(1+3t-m_4t\right)^{15}} \sum _{i=0}^{\infty}
   \frac{(1+i) (2+i) (4+i)!t^i}{\left(1+3t-m_4t\right){}^{3 i}}.
\end{align*}
Hence, we can write $F_6(t) = h(t)f(g(t))$, where
\begin{align*}
f(t) &= \sum_{i=0}^\infty (1+i)(2+i)(4+i)! t^i,\\ 
g(t) &= \frac{t}{(1+3t-m_4t)^3}, \qquad h(t) = \left(1+m_3^2t\right)\!{}^{10}\, \frac{e^{t \left(m_6-10 m_3^2-15m_4 + 30\right)}}{48 \left(1+3t-m_4t\right)^{15}} \\
\end{align*}
are factorially divergent of type $(1,7)$ since
\[
(1+i)(2+i)(4+i)! = \Gamma(i+7)-8\Gamma(i+6)+12\Gamma(i+5)
\]
and $g(t)$ and $h(t)$ are analytic. Thus, by Lemma \ref{lem:borinsky},
\[
(\mathcal{A}^1_7 F_6)(t) = h(t)\left(\frac{t}{g(t)}\right)^7 e^{\frac{1}{t}-\frac{1}{g(t)}} (\mathcal{A}^1_7 f )(g(t)) = h(t) \left(\frac{t}{g(t)}\right)^7 e^{\frac{1}{t}-\frac{1}{g(t)}} (1-8 g(t) + 12 g^2(t)).
\]
Apart from a factor $(n!)^2 e^{3(m_4-3)}/48$, this is our function $C(t)$ from the original statement of Theorem \ref{asym_thm}.

For $\Omega = \{-1,1\}$, the asymptotic expression 
\[
f_6(n) \sim \frac{(n!)^3}{48
   e^6}\left(n^6+29 n^5+335 n^4+\frac{5861 n^3}{3}+\frac{17944 n^2}{3}+\frac{44036 n}{5}+\frac{167536}{45}-\frac{210176}{63 n}\right)
\]
gives an excellent approximation to $f_6(n)$ for $n \geq 10$. The following figure shows the ratio of this asymptotic expression to the actual value of $f_6(n)$ for $n$ up to $20$.

\begin{figure}[h]
\centering
\includegraphics[scale=1.0]{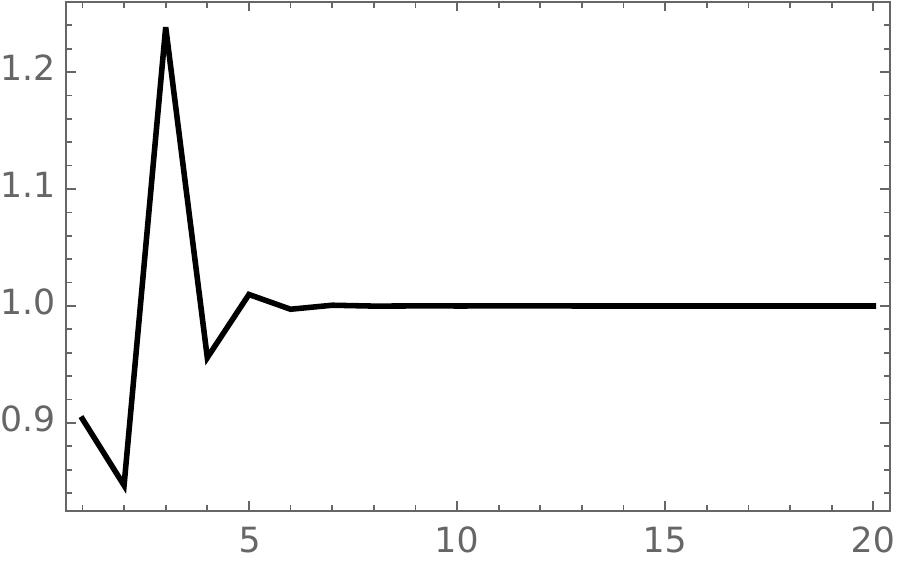}
\caption{The ratio between the asymptotic expression and $f_6(n)$ for $\Omega = \{-1,1\}$}
\end{figure}

\newpage

\newpage
\begin{appendix}
\section{Direct proof of Lemma \ref{pairing}}\label{sec:directproof}
\begin{no-lemma} (\textit{Restatement of Lemma \ref{pairing}}).
For all $n \in \mathbb{N}$, $P_{n}=n(n+2)(n+4)P_{n-1}$ where $P_0=1$.
\end{no-lemma}
\begin{proof}
We recursively compute $P_{n} = \sum_{t \in T_{k,n}}{\sgn(t)|\mathcal{P}(t)|}$ based on where the six $n$ are located in $t$. 

We can count the cases where all of the $n$ are in a 6-column as follows. Given a table $t \in T_{k,n-1}$ and a pairing $P \in \mathcal{P}(t)$, we can obtain a table $t' \in T_{k,n}$ and a pairing $P' \in \mathcal{P}(t)$ by choosing a location for the 6-column, choosing a pairing for this column, and using $t$ and $P$ to fill in the remainder of $t'$ and $P'$. There are $n$ possible places for the 6-column, it has $15$ possible pairings, and $\sgn(t') = \sgn(t)$, so this gives a contribution of $15nP_{n-1}$. 

We can count the cases where four of the $n$ are in a 4-column and two of the $n$ appear in a different column as follows. Given a table $t \in T_{k,n-1}$ and a pairing $P \in \mathcal{P}(t)$, we can obtain a table $t' \in T_{k,n}$ and a pairing $P' \in \mathcal{P}(t)$ with the following steps:
\begin{enumerate}
    \item Choose which column will be the 4-column containing four of the $n$. We initially put all six $n$ in this column.
    \item Fill in the remaining columns using $t$ and $P$.
    \item Choose one of the $3(n-1)$ pairs in $P$ and swap two of the $n$ with this pair.
    \item Choose a pairing for the remaining four $n$.
\end{enumerate}
There are $n$ possible places for the 4-column containing four of the $n$, there are $3(n-1)$ pairs in $P$ which can be swapped with two of the $n$, there are $3$ different pairings for the remaining four $n$, and $\sgn(t') = \sgn(t)$, so this gives a contribution of $3*3*n(n-1)*P_{n-1} = 9n(n-1)P_{n-1}$.


The trickiest case to analyze is the case when the six $n$ are split into three different columns. The idea for this case is that there is a correspondence between sets of $2$ columns containing pairs of the elements $a,b,c,d,e,f$ and sets of $3$ columns containing pairs of the elements $a,b,c,d,e,f$ where each column also contains a pair of $n$. This correspondence is highly non-trivial and relies on the signs of the permutations.
\begin{definition}
Let $S_1,S_2,S_3,S_4,S_5,S_6$ be six sets such that each set $S_{i}$ contains two of the elements $\{a,b,c,d,e,f\}$ and each element in $\{a,b,c,d,e,f\}$ is contained in two of the sets $S_1,S_2,S_3,S_4,S_5,S_6$.

We define $T_2(S_1,S_2,S_3,S_4,S_5,S_6)$ to be the set of $6 \times 2$ tables $t$ such that the ith row contains the elements in $S_i$ and each element appears an even number of times in each column. Similarly, we define $T_3(S_1,S_2,S_3,S_4,S_5,S_6)$ to be the set of $6 \times 3$ tables $t$ such that the ith row contains the elements in $S_i \cup \{n\}$ and each element appears an even number of times in each column.

For each $t \in T_2(S_1,S_2,S_3,S_4,S_5,S_6)$, we define $\sgn(t)$ to be the product of the signs of the rows of $t$ where row $i$ of $t$ has sign $1$ if the elements of $S_i$ appear in order and sign $-1$ if the elements of $S_i$ appear out of order. Similarly, for each $t \in T_3(S_1,S_2,S_3,S_4,S_5,S_6)$, we define $\sgn(t)$ to be the product of the signs of the rows of $t$ where row $i$ of $t$ has sign $1$ if it takes an even number of swaps to transform it into $S_i \cup \{n\}$ and $-1$ if it takes an odd number of swaps to transform it into $S_i \cup \{n\}$.
\end{definition}
\begin{lemma}\label{lem:correspondence}
For all possible $S_1,S_2,S_3,S_4,S_5,S_6$, 
\[
\sum_{t \in T_3(S_1,S_2,S_3,S_4,S_5,S_6)}{\sgn(t)} = 6\sum_{t \in T_2(S_1,S_2,S_3,S_4,S_5,S_6)}{\sgn(t)}.
\]
\end{lemma}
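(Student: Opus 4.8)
The plan is to prove the identity by decomposing $T_3(S_1,\dots,S_6)$ according to the \emph{$n$-pattern} of a table and matching each piece against $T_2(S_1,\dots,S_6)$. Since each row of a table in $T_3$ contains exactly one $n$, such a table determines a vector $\vec c=(c_1,\dots,c_6)\in[3]^6$ recording the column of the $n$ in each row, and the even-column condition forces each of the three columns to receive an even number of $n$'s; hence the column multiplicities of $\vec c$ are, up to permuting columns, one of $(6,0,0)$, $(4,2,0)$, $(2,2,2)$. For a fixed admissible $\vec c$, deleting the $n$'s leaves in each row $i$ the two elements of $S_i$ placed into the two columns other than $c_i$, subject to each of $a,\dots,f$ occurring an even number of times per column. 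Thus $\sum_{t\in T_3}\sgn(t)=\sum_{\vec c}\sum_{t}\sgn(t)$, and the first task is to express the inner signed sum for each $\vec c$ through two-column data.

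To keep the sign bookkeeping clean I would pass to a determinant form of the per-row contribution. Enforcing the parity constraints by a character sum over variables $\epsilon_{v,c}\in\{\pm1\}$ (one per element $v$ and column $c$) makes the contribution of row $i$ factor out and equal the $3\times3$ determinant $\det(\epsilon_{e,c})$ with rows indexed by the elements of $S_i\cup\{n\}$ in canonical order and columns by $[3]$; the $T_2$ per-row sum is the analogous $2\times2$ determinant. Expanding each $3\times3$ determinant along its $n$-row and summing the three variables $\epsilon_{n,1},\epsilon_{n,2},\epsilon_{n,3}$ collapses the expansion onto the admissible patterns $\vec c$ and replaces each row factor by the $2\times2$ minor on the two columns $\neq c_i$, weighted by the cofactor sign $(-1)^{c_i-1}$. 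This exhibits each \emph{concentrated} pattern $(6,0,0)$ — there are three of them — as contributing exactly $\sum_{t\in T_2}\sgn(t)$: when all $n$'s share a column, every row uses the same remaining pair of columns, the unused column's sign variables sum to a harmless power of $2$, the cofactor signs multiply to $+1$, and what remains is literally the $T_2$ sum.

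The main obstacle, and the genuinely delicate part, is the \emph{spread} patterns $(4,2,0)$ and $(2,2,2)$, where different rows use different pairs of columns so that the parity constraints couple all three columns at once; this is exactly where the dependence on the signs is essential. I would evaluate these by the same character/determinant sum and show that, summed over all spread patterns with their cofactor signs, they contribute precisely $3\sum_{t\in T_2}\sgn(t)$ more, for a grand total of $6\sum_{t\in T_2}\sgn(t)$. It is convenient to organize the computation along the cycle decomposition of the $2$-regular multigraph on $\{a,\dots,f\}$ whose edges are the rows $S_i$: the $T_2$ sum factors over the cycles and vanishes as soon as any cycle is odd (no proper $2$-colouring exists), so one must separately check that the $T_3$ sum also vanishes in that case, which I expect to follow from a sign-reversing involution on the $n$-placements. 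Carrying out the per-cycle evaluation on the all-even case and verifying that the spread patterns supply exactly the remaining factor of $3$ with the correct signs is where essentially all of the work lies.
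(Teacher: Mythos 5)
Your algebraic setup is sound and the part of it you actually carry out is correct: the character-sum trick does factor the signed, parity-constrained sum into a product of $3\times 3$ determinants, expansion along the $n$-row does collapse onto patterns $\vec c$ with even column multiplicities, the cofactor signs do multiply to $+1$ on every such pattern (since $\sum_i c_i$ is then even), and each of the three concentrated patterns $(6,0,0)$ contributes exactly $\sum_{t\in T_2(S_1,\dots,S_6)}\sgn(t)$. The problem is that this accounts for only $3\sum_{t\in T_2}\sgn(t)$, i.e.\ half of the claimed factor $6$. The assertion that the spread patterns $(4,2,0)$ and $(2,2,2)$ supply exactly the other $3\sum_{t\in T_2}\sgn(t)$ is never proved; you defer it explicitly (``where essentially all of the work lies''), and the companion claim that the $T_3$ sum vanishes whenever the multigraph on $\{a,\dots,f\}$ has an odd cycle rests on a sign-reversing involution you ``expect'' to exist but do not construct. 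These deferred steps are not routine bookkeeping: they are precisely where the signs couple all three columns at once, which is the entire content of the lemma. As written, the proposal is a correct and promising reduction, not a proof.

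The paper closes exactly this gap by brute finiteness rather than by a per-pattern evaluation: up to permuting rows and relabeling $\{a,\dots,f\}$ (operations under which both sides of the identity transform identically), the $2$-regular multigraph with edge multiset $\{S_1,\dots,S_6\}$ has only four possible cycle types --- three $2$-cycles, a $2$-cycle plus a $4$-cycle, a single $6$-cycle, or two $3$-cycles --- and both sums are computed explicitly in each case, giving $48=6\cdot 8$, $24=6\cdot 4$, $-12=6\cdot(-2)$, and $0=6\cdot 0$ respectively. The most economical way to complete your argument would be to graft this observation onto your framework: your reduction shows it suffices to evaluate the spread-pattern contribution, and since that contribution (like everything else here) depends only on the cycle type, one can simply check the four representative configurations directly, recovering the paper's case analysis. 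Without that step, or an honest per-cycle evaluation of the spread patterns, the lemma remains unproven.
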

\begin{corollary} For all $n \in \mathbb{N}$,
\[
\sum_{t \in T_{6,n}: n \text{ appears in } 3 \text{ different columns}}{\sgn(t)|\mathcal{P}(t)|} = n(n-1)(n-2)P_{n-1}.
\]
\end{corollary}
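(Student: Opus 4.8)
The plan is to prove the corollary by reducing the three-column case of the recursion for $P_n$ down to $P_{n-1}$, letting Lemma~\ref{lem:correspondence} absorb all of the sign cancellation. First I would unfold the pairing count and write the left-hand side as $\sum_{(t,P)}\sgn(t)$, summed over pairs consisting of a table $t\in T_{6,n}$ in which $n$ occupies three columns $c_1,c_2,c_3$ (two copies in each) together with a pairing $P\in\mathcal{P}(t)$; this is legitimate because $\abs{\mathcal{P}(t)}=\sum_{P\in\mathcal{P}(t)}1$. In each of $c_1,c_2,c_3$ the two copies of $n$ are forced to be matched to one another, so these three $n$-$n$ pairs carry no freedom; the remaining choices of $P$ inside these columns amount to a matching of the four non-$n$ entries of each column into two pairs, producing six pairs in all across the three columns.

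Next I would localize to $c_1,c_2,c_3$ and, crucially, use the pairing $P$ rather than the underlying values to supply the six formal elements $a,\dots,f$ of Lemma~\ref{lem:correspondence}: label the six pairs just described by $a,\dots,f$. Each such pair lies inside a single column and so meets exactly two of the six rows, hence each element belongs to exactly two of the sets $S_i$; and each row contains exactly two non-$n$ entries, lying in two distinct pairs, so each $S_i$ has size two. Thus $S_1,\dots,S_6$ satisfy the hypotheses of the lemma for \emph{every} $(t,P)$, with no regard for coincidences among the actual values. The arrangement of these labelled entries within the three columns is then precisely an element of $T_3(S_1,\dots,S_6)$, and, factoring $\sgn(t)$ as the product of the sign on $c_1,c_2,c_3$ and the sign on the remaining columns, the sum over the arrangements of the three $n$-columns (with the rest held fixed) is $\sum_{s\in T_3(S_1,\dots,S_6)}\sgn(s)$. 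Lemma~\ref{lem:correspondence} rewrites this as
\[
\sum_{s\in T_3(S_1,\dots,S_6)}\sgn(s) \;=\; 6\sum_{s\in T_2(S_1,\dots,S_6)}\sgn(s).
\]

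Finally I would reassemble. In any element of $T_2(S_1,\dots,S_6)$ each of the six labelled pairs occupies two cells of a single column, so such an element is literally a pair of columns of some $t'\in T_{6,n-1}$ carrying the induced within-column pairing; gluing these two columns onto the $n-3$ columns of $t$ that were left untouched (with their pairings) yields a pair $(t',P')$. Summing $\sgn(t')$ weighted by its number of pairings over all such $(t',P')$ reproduces $P_{n-1}$ exactly, while the only datum forgotten in the reduction is which three of the $n$ columns carried the copies of $n$, contributing a factor $\binom{n}{3}$. Combining this with the factor $6$ from the lemma gives $6\binom{n}{3}=n(n-1)(n-2)$, so the three-column sum equals $n(n-1)(n-2)P_{n-1}$, as claimed.

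The genuinely hard content is Lemma~\ref{lem:correspondence} itself: it is the sign identity between the two- and three-column pictures where the cancellation lives, and I am treating it as given. Granting it, the remaining obstacle is purely bookkeeping: one must check that the sign conventions of $T_2$ and $T_3$ agree with the row-permutation signs of $t'$ and $t$, and verify that the passage $(t,P)\mapsto\bigl(t',P',\{c_1,c_2,c_3\}\bigr)$ is a genuine bijection—in particular that the column-position convention reassembling the $n-1$ columns of $t'$ is consistent—so that $P_{n-1}$ is recovered with the factor $\binom{n}{3}$ and no over- or under-counting.
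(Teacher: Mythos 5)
Your argument is, at bottom, the paper's own proof run in the opposite direction: the paper starts from pairs $(t',P')$ with $t' \in T_{6,n-1}$, uses the pairing to attach the labels $a,\dots,f$ to the six pairs in two columns, and expands via Lemma~\ref{lem:correspondence} to the three-column side, whereas you decompose the three-column side of $T_{6,n}$ down to $T_{6,n-1}$. Both versions rest on exactly the same two ingredients — pairing-derived labels feeding Lemma~\ref{lem:correspondence}, and the count $6\binom{n}{3}=n(n-1)(n-2)$ — so the route is essentially the paper's. Your choice to label the six pairs directly (rather than the paper's value-based relabeling with tie-breaking rules) is a clean touch, since the two non-$n$ entries of any row always lie in distinct pairs.

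There is, however, one step that fails as literally stated: ``the sum over the arrangements of the three $n$-columns (with the rest held fixed) is $\sum_{s\in T_3(S_1,\dots,S_6)}\sgn(s)$.'' The set system $S_1,\dots,S_6$ is \emph{not} a function of the rest of the table alone. The untouched columns determine, for each row $i$, which two values must occupy the three special columns in that row, but they do not determine how repeated values get paired: if a value $w$ occurs four times among the three columns (e.g.\ a column reading $(n,n,w,w,w,w)$, or $w$ appearing twice in each of two columns), its four occurrences can be split into two pairs in more than one way, and different arrangements together with different pairings $P$ realize different row-sets for the $w$-pairs, hence genuinely different set systems $S_1,\dots,S_6$. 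So the fiber over a fixed ``rest'' is a disjoint union of copies of $T_3(S^G)$ over several pair structures $G$, not a single $T_3(S_1,\dots,S_6)$. The repair stays inside your framework: group the pairs $(t,P)$ by the rest \emph{together with} the pair structure of the three special columns, i.e.\ the value and row-set of each of the six pairs (these six data are automatically pairwise distinct, because a row of a permutation table cannot repeat a value, so the labeling is well defined); apply Lemma~\ref{lem:correspondence} within each such group, where the identification of the fiber with $T_3(S^G)$ and of the glued two-column configurations with $T_2(S^G)$ is now honest; and only then sum over groups. The factor $6\binom{n}{3}$ and the recovery of $P_{n-1}$ go through verbatim, and this refinement is precisely the role played by the paper's canonical relabeling procedure. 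With that correction your proof is sound.
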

\begin{proof}
Recall that
\[
\sum_{t \in T_{6,n-1}}{\sgn(t)|\mathcal{P}(t)|} = P_{n-1}.
\]
We now apply Lemma \ref{lem:correspondence} to the first two columns of the pairs $(t,P)$ where $t \in T_{6,n-1}$ and $P \in \mathcal{P}(t)$. To do this, we use $P$ to relabel the elements in the first two columns as $a,b,c,d,e,f$. One way to do this is as follows. We go through the rows one by one and assign the next unused label(s) to the element(s) which whose pair has not yet appeared. If there are two such elements, we assign the first unused label to the lower element and the next unused label to the higher element. If both elements are the same, we assign the first unused label to the column where the pair of this element appears first. If there is still a tie, we assign the same label to both elements and skip the next label. Lemma \ref{lem:correspondence} still holds in this case as having $S_{i} = S_{j} = \{a,a\}$ instead of $S_{i} = S_{j} = \{a,b\}$ divides both sides by $2$.

After doing this relabeling, for each $i \in [6]$, we take $S_i$ to be the first two elements in row $i$. Applying Lemma \ref{lem:correspondence}, we obtain tables $t'$ and pairings $P'$ by taking $P'$ to be the unique pairing for each column and inverting the labeling of the elements in the first two columns of $t$ by $\{a,b,c,d,e,f\}$. This implies that whenever $n \geq 3$,
\[
\sum_{t \in T_{6,n}: n \text{ appears in the first three columns}}{\sgn(t)|\mathcal{P}(t)|} = 6\sum_{t \in T_{6,n-1}}{\sgn(t)|\mathcal{P}(t)|} = 6P_{n-1}.
\]
There are $\binom{n}{3} = \frac{n(n-1)n-2)}{6}$ possibilities for which $3$ columns contain $n$ so we have that 
\[
\sum_{t \in T_{6,n}: n \text{ appears in } 3 \text{ different columns}}{\sgn(t)|\mathcal{P}(t)|} = n(n-1)(n-2)P_{n-1},
\]
as needed.
\end{proof}

Summing these three cases up, we have
\begin{align*}
  P_n&=15nP_{n-1}+9n(n-1)P_{n-1}+n(n-1)(n-2)P_{n-1}\\  
  &=(n^3 + 6 n^2 + 8 n)P_{n-1} = n(n+2)(n+4)P_{n-1}.
\end{align*}
\end{proof}
We now prove Lemma \ref{lem:correspondence}.
\begin{proof}[Proof of Lemma  \ref{lem:correspondence}]
Up to permutations of the rows and $\{a,b,c,d,e,f\}$, we have the following four cases for $S_1,S_2,S_3,S_4,S_5,S_6$:
\begin{enumerate}
    \item $S_1 = S_2 = \{a,b\}$, $S_3 = S_4 = \{c,d\}$, and $S_5 = S_6 = \{e,f\}$.
    \item $S_1 = S_2 = \{a,b\}$, $S_3 = \{c,d\}$, $S_4 = \{c,e\}$, $S_5 = \{d,f\}$, and $S_6 = \{e,f\}$.
    \item $S_1 = \{a,b\}$, $S_2 = \{a,c\}$, $S_3 = \{b,d\}$, $S_4 = \{d,e\}$, $S_5 = \{c,f\}$, and $S_6 = \{e,f\}$.
    \item $S_1 = \{a,b\}$, $S_2 = \{a,c\}$, $S_3 = \{b,c\}$, $S_4 = \{d,e\}$, $S_5 = \{d,f\}$, and $S_6 = \{e,f\}$.
\end{enumerate}
We can see that these are the only possibilities as follows. If we construct a multi-graph where the vertices are $\{a,b,c,d,e,f\}$ and the edges are $\{S_1,S_2,S_3,S_4,S_5,S_6\}$ then in this multi-graph, every vertex will have degree $2$. 
\begin{enumerate}
    \item If there is a cycle of length $2$ then for the remaining $4$ vertices, we will either have two more cycles of length $2$ or a cycle of length $4$. This gives cases 1 and 2.
    \item If there is a cycle of length $3$ then there must be another cycle of length $3$ on the remaining vertices. This gives case 4.
    \item If there are no cycles of length $2$ or $3$ then we must have a cycle of length $6$. This gives case 3.
\end{enumerate}

For the first three cases, $T_2(S_1,S_2,S_3,S_4,S_5,S_6)$ is nonempty as shown by the examples below. For the fourth case, $T_2(S_1,S_2,S_3,S_4,S_5,S_6)$ is empty.
\[
\begin{Bmatrix}
    a&b\\a&b\\c & d\\c & d\\e & f\\e & f \end{Bmatrix}, 
\begin{Bmatrix}
    a&b\\a&b\\c & d\\c & e\\f & d\\f & e \end{Bmatrix}, 
\begin{Bmatrix}
    a&b \\a&c \\d & b\\d & e\\f & c\\f& e
\end{Bmatrix}
\]
For all four cases, $T_3(S_1,S_2,S_3,S_4,S_5,S_6)$ is nonempty as shown by the examples below.
\[
\begin{Bmatrix}
    a&b & n\\a&b & n\\c&n & d\\c&n & d\\n&e & f\\n&e & f\end{Bmatrix}, 
\begin{Bmatrix}
    a&b & n\\a&b & n\\c&n & d\\c&n & e\\n&f & d\\n&f& e
\end{Bmatrix}, 
\begin{Bmatrix}
    a&b & n\\a&c & n\\n&b & d\\e&n & d\\n&c & f\\e&n& f
\end{Bmatrix},
\begin{Bmatrix}
    a&b & n\\a&n & c\\n&b & c\\d&e & n\\d&n & f\\n&e& f
\end{Bmatrix}
\]
We now show that for each of the four cases, 
\[
\sum_{t \in T_3(S_1,S_2,S_3,S_4,S_5,S_6)}{\sgn(t)} = 6\sum_{t \in T_2(S_1,S_2,S_3,S_4,S_5,S_6)}{\sgn(t)}.
\]
\begin{enumerate}
    \item For the first case, $|T_2(S_1,S_2,S_3,S_4,S_5,S_6)| = 8$ as we can choose the order of $\{a,b\}$ in row $1$, the order of $\{c,d\}$ in row $3$, and the order of $\{e,f\}$ in row $5$. All $t \in T_2(S_1,S_2,S_3,S_4,S_5,S_6)$ have positive sign as rows 2, 4, and 6 must be the same as rows 1, 3, and 5. Thus, $\sum_{t \in T_2(S_1,S_2,S_3,S_4,S_5,S_6)}{\sgn(t)} = 8$.
    
    To analyze $T_3(S_1,S_2,S_3,S_4,S_5,S_6)$, observe that there are $6$ choices for the positions of the $n$ in rows $1$, $3$, and $5$ and we can again choose the order of $\{a,b\}$ in row $1$, the order of $\{c,d\}$ in row $3$, and the order of $\{e,f\}$ in row $5$. Thus, $|T_3(S_1,S_2,S_3,S_4,S_5,S_6)| = 48$. All $t \in T_3(S_1,S_2,S_3,S_4,S_5,S_6)$ have positive sign as rows 2, 4, and 6 must be the same as rows 1, 3, and 5 so we have that $\sum_{t \in T_3(S_1,S_2,S_3,S_4,S_5,S_6)}{\sgn(t)} = 48$.
    \item For the second case, $|T_2(S_1,S_2,S_3,S_4,S_5,S_6)| = 4$ as we can choose the order of $\{a,b\}$ in row $1$ and the order of $\{c,d\}$ in row $3$ and this uniquely determines the rest of the table. It can be checked that all $t \in T_2(S_1,S_2,S_3,S_4,S_5,S_6)$ have positive sign so we have that $\sum_{t \in T_2(S_1,S_2,S_3,S_4,S_5,S_6)}{\sgn(t)} = 4$.
    
    To analyze $T_3(S_1,S_2,S_3,S_4,S_5,S_6)$, observe that there are $6$ choices for the order of $\{a,b,n\}$ in row $1$. Once this order is chosen, there are two choices for the position of the $n$ in row $3$ and two choices for the order of $\{c,d\}$ in row $3$. It can be checked that this uniquely determines the rest of the table and all $t \in T_3(S_1,S_2,S_3,S_4,S_5,S_6)$ have positive sign so we have that $\sum_{t \in T_3(S_1,S_2,S_3,S_4,S_5,S_6)}{\sgn(t)} = 24$.
    \item For the third case, $|T_2(S_1,S_2,S_3,S_4,S_5,S_6)| = 2$ as we can choose the order of $\{a,b\}$ in row $1$ and this uniquely determines the rest of the table. Here both $t \in T_2(S_1,S_2,S_3,S_4,S_5,S_6)$ have negative sign so we have that $\sum_{t \in T_2(S_1,S_2,S_3,S_4,S_5,S_6)}{\sgn(t)} = -2$.
    
    For $T_3(S_1,S_2,S_3,S_4,S_5,S_6)$, there are $6$ choices for the order of $\{a,b,n\}$ in row $1$. When row $1$ is $a,b,n$, we have the following four tables:
    \[
    \begin{Bmatrix}
    a&b & n\\a&c & n\\n&b & d\\e&n & d\\n&c & f\\e&n& f
    \end{Bmatrix},
    \begin{Bmatrix}
    a&b & n\\a&n & c\\d&b & n\\d&n & e\\n&f & c\\n&f& e
    \end{Bmatrix},
    \begin{Bmatrix}
    a&b & n\\a&n & c\\n&b & d\\e&n & d\\n&f & c\\e&f& n
    \end{Bmatrix},
    \begin{Bmatrix}
    a&b & n\\a&n & c\\n&b & d\\n&e & d\\f&n & c\\f&e& n
    \end{Bmatrix}
    \]
    Of these tables, the first, second, and fourth table have negative sign while the third table has positive sign so the net contribution is $-2$. Multiplying this by $6$, we have that 
    \[
    \sum_{t \in T_3(S_1,S_2,S_3,S_4,S_5,S_6)}{\sgn(t)} = -12.
    \]
    \item For the fourth case, $T_2(S_1,S_2,S_3,S_4,S_5,S_6)$ is empty because each column can only contain one of $\{a,b,c\}$ and one of $\{b,c,d\}$.
    
    To analyze $T_3(S_1,S_2,S_3,S_4,S_5,S_6)$, observe that we can choose the order of $\{a,b,n\}$ in row 1 and the order of $\{d,e,n\}$ in row 4 and this uniquely determines the rest of the table. The sign of each table will be the product of the sign for row $1$ and the sign for row $4$, so we have the same number of tables with positive and negative sign and thus 
    $\sum_{t \in T_2(S_1,S_2,S_3,S_4,S_5,S_6)}{\sgn(t)} = \sum_{t \in T_3(S_1,S_2,S_3,S_4,S_5,S_6)}{\sgn(t)} = 0$.
\end{enumerate}
\end{proof}
\end{appendix}
\end{document}